\newtheorem{thm}{Theorem}[section]
\newtheorem*{thm*}{Theorem}
\newtheorem{cor}[thm]{Corollary}
\newtheorem{lem}[thm]{Lemma}
\newtheorem{prop}[thm]{Proposition}
\theoremstyle{definition}
\newtheorem{dfn}[thm]{Definition}
\newtheorem*{dfn*}{Definition}
\newtheorem{rem}[thm]{Remark}
\newtheorem{ex}[thm]{Example}
\newtheorem*{nota*}{Notation}
\theoremstyle{remark}
\newtheorem*{ac}{Acknowledgments}
\newtheorem*{claim*}{Claim}
\renewcommand{\qedsymbol}{$\blacksquare$}
\numberwithin{equation}{thm}
\def\p{\mathfrak{p}}
\def\q{\mathfrak{q}}
\def\P{\mathcal{P}}
\def\Q{\mathcal{Q}}
\def\dm{\operatorname{\mathsf{D^{\mbox{\boldmath$-$}}}}}
\def\db{\operatorname{\mathsf{D^{\mathsf{b}}}}}
\def\ltensor{\otimes^{\mathbf{L}}}
\def\mod{\operatorname{\mathsf{mod}}}
\def\Mod{\operatorname{\mathsf{Mod}}}
\def\spc{\operatorname{\mathtt{Spc}}}
\def\spec{\operatorname{\mathsf{Spec}}}
\def\supp{\operatorname{\mathsf{Supp}}}
\def\spp{\operatorname{\mathsf{Spp}}}
\def\min{\mathsf{Min}}
\def\max{\mathsf{Max}}
\def\V{\mathsf{V}}
\def\H{\mathsf{H}}
\def\ann{\operatorname{\mathsf{Ann}}}
\def\h{\operatorname{\mathsf{ht}}}
\def\one{\mathbf{1}}
\def\pp{\mathfrak{s}}
\def\s{\mathcal{S}}
\def\T{\mathcal{T}}
\def\u{\mathtt{U}}
\def\X{\mathcal{X}}
\title[Connectedness of the Balmer spectra]{Connectedness of the Balmer spectra of right bounded derived categories}
\author{Hiroki Matsui} 
\address{Graduate School of Mathematics, Nagoya University, Furocho, Chikusaku, Nagoya, Aichi 464-8602, Japan}
\email{m14037f@math.nagoya-u.ac.jp}
\date{\today}
\thanks{2010 {\em Mathematics Subject Classification.} 13D09, 19D23}
\thanks{{\em Key words and phrases.} Balmer spectrum, prime thick tensor ideal, derived category, noetherian space, connected space}
\thanks{The author is supported by Grant-in-Aid for JSPS Fellows 16J01067.
}
\begin{document} 

\begin{abstract}
By virtue of Balmer's celebrated theorem, the classification of thick tensor ideals of a tensor triangulated category $\T$ is equivalent to the topological structure of its Balmer spectrum $\spc \T$.
Motivated by this theorem, we discuss connectedness and noetherianity of the Balmer spectrum of  a right bounded derived category of finitely generated modules over a commutative ring.
\end{abstract}

\maketitle

\section{Introduction}
Tensor triangulated geometry is a theory introduced by Balmer \cite{B} to study tensor triangulated categories by algebro-geometric methods.
Let $(\T, \otimes, \one)$ be an essentially small tensor triangulated category (i.e., a triangulated category $\T$ equipped with a symmetric monoidal tensor product $\otimes$ which is compatible with the triangulated structure).
Then Balmer defined a topological space $\spc \T$ which we call the Balmer spectrum of $\T$.
A celebrated theorem due to Balmer \cite{B} states that the radical thick tensor ideals of $\T$ are classified by using the geometry of $\spc \T$:

\begin{thm}[Balmer]\label{B}
There is an order-preserving one-to-one correspondence 
$$
\xymatrix{
\{ \mbox{radical thick tensor ideals of } \T\}  \ar@<0.5ex>[r]^-f 
& \{\mbox{Thomason subsets of } \spc \T\}, \ar@<0.5ex>[l]^-g
}
$$
where $f$ and $g$ are given by $f(\X):=\spp \X:=\bigcup_{X \in \X} \spp X$ and $g(W) := \spp^{-1}(W):=\{X \in \T \mid \spp X \subseteq W \}$, respectively.
\end{thm}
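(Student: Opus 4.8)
The plan is to follow the standard template of tensor-triangular geometry: first record the elementary calculus of the support $\spp$, then check that $f$ and $g$ land in the indicated collections and that they are mutually inverse. I would begin by recalling the basic properties of supports: for $X, Y \in \T$ one has $\spp 0 = \emptyset$, $\spp \one = \spc \T$, $\spp(\Sigma X) = \spp X$, $\spp(X \oplus Y) = \spp X \cup \spp Y$, $\spp(X \otimes Y) = \spp X \cap \spp Y$, and $\spp Y \subseteq \spp X \cup \spp Z$ whenever $X \to Y \to Z \to \Sigma X$ is a triangle. Moreover each complement $\u(X) := \spc \T \setminus \spp X$ is a quasi-compact open subset, these sets form a basis for the topology, and $\u(X \otimes Y) = \u(X) \cup \u(Y)$, $\u(X \oplus Y) = \u(X) \cap \u(Y)$; consequently every quasi-compact open is of the form $\u(X)$, i.e. the closed subsets of $\spc \T$ with quasi-compact complement are precisely those of the form $\spp X$. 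Granting this, $f(\X) = \bigcup_{X \in \X} \spp X$ is a union of closed subsets with quasi-compact complement, hence Thomason; and $g(W) = \spp^{-1}(W)$ is thick (by the triangle and summand properties), an $\otimes$-ideal (by the intersection property), and automatically radical since $\spp(X^{\otimes n}) = \spp X$. Both $f$ and $g$ are order-preserving by inspection.

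Next I would prove $g \circ f = \mathrm{id}$. For a radical thick $\otimes$-ideal $\X$ the inclusion $\X \subseteq \spp^{-1}(\spp \X)$ is immediate; the content is the reverse, i.e. that $\spp X \subseteq \spp \X$ forces $X \in \X$. Here one invokes the existence of enough primes: if $X \notin \X$, a Zorn's-lemma argument produces a thick $\otimes$-ideal $\P$ maximal among those that contain $\X$ and avoid every tensor power $X^{\otimes n}$ (the poset is nonempty because $\X = \sqrt{\X}$), and one checks that such a maximal $\P$ is prime. Granting it, $\P \in \spp X$ (since $X \notin \P$) while $\P \notin \spp \X$ (since $\X \subseteq \P$), contradicting $\spp X \subseteq \spp \X$; hence $X \in \X$.

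Finally, $f \circ g = \mathrm{id}$. For a Thomason subset $W$ the inclusion $\spp(\spp^{-1}(W)) \subseteq W$ is immediate from the definition of $g$. Conversely, given $\P \in W$, writing $W$ as a union of closed subsets with quasi-compact complement yields such a subset $Z$ with $\P \in Z \subseteq W$; by the classification recalled above $Z = \spp X$ for some $X \in \T$, so $\spp X = Z \subseteq W$ gives $X \in \spp^{-1}(W)$ and therefore $\P \in \spp X \subseteq \spp(\spp^{-1}(W))$. Thus $W = \spp(\spp^{-1}(W))$, and $f, g$ are mutually inverse order-preserving bijections.

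The one genuinely non-formal step, and the step I expect to be the main obstacle, is the verification in the second paragraph that the Zorn-maximal $\otimes$-ideal $\P$ is prime: this is where the octahedral axiom and the compatibility of $\otimes$ with triangles are used in an essential way, whereas the remainder of the argument is bookkeeping with the support calculus and with quasi-compactness.
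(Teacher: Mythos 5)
The paper does not prove this statement; it quotes Theorem~\ref{B} directly from Balmer's article~\cite{B} and uses it as a black box. So there is no ``paper's own proof'' to compare against, and the relevant benchmark is Balmer's original argument.

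Measured against that, your sketch is a faithful and essentially correct outline of Balmer's proof: the elementary support calculus, the Zorn construction of a prime ideal witnessing $X\notin\X$, and the identification of the Thomason building blocks (closed sets with quasi-compact complement) with the sets $\spp X$. Two small remarks. First, you wrote $\u(X\oplus Y)=\u(X)\cap\u(Y)$; the dual identity is $\spp(X\oplus Y)=\spp X\cup\spp Y$, so this is right, and the quasi-compactness argument that every quasi-compact open is some $\u(X)$ then goes through by covering with basic opens and using $\u(X_1)\cup\cdots\cup\u(X_n)=\u(X_1\otimes\cdots\otimes X_n)$. Second, the step you flag as the main obstacle --- that a thick $\otimes$-ideal $\P$ maximal among those containing $\X$ and disjoint from the multiplicative family $\{X^{\otimes n}\}_{n\ge 1}$ is prime --- is exactly Balmer's key lemma; its proof hinges on the fact that if $a\otimes b\in\P$ then the $\otimes$-ideal generated by $\P$ and $a$, tensored against the one generated by $\P$ and $b$, lands inside $\P$, which requires the explicit filtration description of a generated thick $\otimes$-ideal and the compatibility of $\otimes$ with exact triangles. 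You correctly isolate this as the only non-formal ingredient; the rest is bookkeeping, as you say.
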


From this result, if we want to classify the radical thick tensor ideals of a given tensor triangulated category $\T$, we have only to understand the topological space $\spc \T$.
Therefore, it is crucial to discuss topological properties of the Balmer spectrum.

In this paper, we consider the right bounded derived category $\dm(\mod R)$ of a commutative noetherian ring $R$.
This triangulated category is a tensor triangulated category with respect to derived tensor products, and we can consider its Balmer spectrum $\spc \dm(\mod R)$.
The main results of this paper are the following two theorems:

\begin{thm}[Theorem \ref{noeth}]
If the Balmer spectrum $\spc \dm(\mod R)$ is a noetherian topological space, then the Zariski spectrum $\spec R$ is a finite set.
\end{thm}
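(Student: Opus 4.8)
The plan is to prove the contrapositive: \emph{if $\spec R$ is infinite, then $\spc\dm(\mod R)$ is not a noetherian space}, and to witness this by producing an infinite, strictly descending chain of closed subsets.

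First I would attach to each prime $\p\in\spec R$ a point of $\spc\dm(\mod R)$. Put
\[
\P_\p:=\{\,X\in\dm(\mod R)\mid X_\p\cong 0\ \text{in}\ \dm(\mod R_\p)\,\},
\]
so that $X\in\P_\p$ exactly when $\p\notin\supp_R X:=\bigcup_{i}\supp_R H^i(X)$. Exactness of $H^i(-)$ and of localization makes $\P_\p$ a thick subcategory, $(X\ltensor Y)_\p\cong X_\p\ltensor Y_\p$ makes it a thick tensor ideal, and $R_\p\neq 0$ makes it proper. The essential point is that $\P_\p$ is \emph{prime}: if $X_\p\ltensor_{R_\p}Y_\p\cong 0$ with $X_\p,Y_\p\neq 0$, let $s,t$ be the top nonvanishing cohomological degrees of $X_\p,Y_\p$; a truncation argument reduces $H^{s+t}(X_\p\ltensor_{R_\p}Y_\p)$ to $H^s(X_\p)\otimes_{R_\p}H^t(Y_\p)$, which is nonzero by Nakayama's lemma over the local ring $R_\p$, a contradiction. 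Hence $\P_\p\in\spc\dm(\mod R)$, with $X\in\P_\p\iff\p\notin\supp_R X$.

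Next I would choose primes of a convenient shape. Since $R$ is noetherian each of its primes has finite height, and the union of a chain of primes is again a prime, so $\spec R$ contains no infinite chain under inclusion; an infinite poset with no infinite chain contains an infinite antichain (infinite Ramsey theorem for the ``comparable vs.\ incomparable'' colouring of pairs). As $\spec R$ is infinite, fix pairwise incomparable primes $\p_1,\p_2,\dots$. Then pick integers $n_1>n_2>\cdots$ and set
\[
X_i:=\bigoplus_{j\ge i}(R/\p_j)[-n_j]\in\dm(\mod R),
\]
a bounded-above complex of finitely generated modules with $\supp_R X_i=\bigcup_{j\ge i}\V(\p_j)$. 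Since $X_i\cong(R/\p_i)[-n_i]\oplus X_{i+1}$, one gets $\spp X_i=\spp(R/\p_i)\cup\spp X_{i+1}\supseteq\spp X_{i+1}$, and this inclusion is \emph{strict}: $\P_{\p_i}\in\spp X_i$ because $\p_i\in\V(\p_i)\subseteq\supp_R X_i$, whereas $\P_{\p_i}\notin\spp X_{i+1}$ because $\p_j\not\subseteq\p_i$ for all $j>i$ forces $\p_i\notin\supp_R X_{i+1}$, i.e.\ $X_{i+1}\in\P_{\p_i}$. Hence $\spp X_1\supsetneq\spp X_2\supsetneq\cdots$ is an infinite strictly descending chain of closed subsets of $\spc\dm(\mod R)$, so that space fails to be noetherian, as required.

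The one step where real work is needed is the primeness of $\P_\p$, that is, the Nakayama-type vanishing statement for the derived tensor product of bounded-above complexes over a local ring; the rest uses only elementary poset combinatorics and the crucial structural feature that $\dm(\mod R)$, unlike $\db(\mod R)$, contains objects whose homology is spread over infinitely many degrees. It is precisely this feature that lets Balmer supports descend forever, even though the supports of single finitely generated $R$-modules satisfy the descending chain condition by noetherianity of $R$.
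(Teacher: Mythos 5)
Your proof is correct, and the key construction is the same as the paper's: you take an infinite antichain of primes $\p_1,\p_2,\dots$ in $\spec R$ (this exists by noetherianity of $R$ and infiniteness of $\spec R$) and build the complexes $X_i$ by placing $R/\p_j$ (for $j\ge i$) in distinct cohomological degrees tending to $-\infty$, so that $X_{i+1}$ is a direct summand of $X_i$. The difference is in how you extract the conclusion. The paper argues by contradiction: from noetherianity of $\spc\dm(\mod R)$ it deduces via Balmer's classification (Theorem~\ref{B}) that $\sqrt{\langle X_n\rangle}=\sqrt{\langle X_{n+1}\rangle}$ for some $n$, then applies $\supp$ to obtain $\bigcup_{i\ge n}\V(\p_i)=\bigcup_{i\ge n+1}\V(\p_i)$, which forces $\p_m\subseteq\p_n$ for some $m>n$, contradicting incomparability. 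You instead prove the contrapositive directly: you exhibit the explicit prime $\s(\p_i)=\{X : X_{\p_i}\cong 0\}$ which lies in $\spp X_i$ but not in $\spp X_{i+1}$, hence $\spp X_1\supsetneq\spp X_2\supsetneq\cdots$ is an honestly strict chain. Your route is a bit more self-contained: it only needs that $\s(\p)$ is a prime thick tensor ideal (which you re-prove via a Nakayama argument on top cohomology, where the paper cites~\cite{MT}), and it avoids invoking Balmer's classification theorem or passing through radical thick tensor ideals. Your existence argument for the infinite antichain (Ramsey applied to the comparability relation, plus the observation that noetherian rings have no infinite chains of primes) is also slightly different from the paper's height-counting lemma, but both are fine. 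No gaps.
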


\begin{thm}[Corollary \ref{conn}]
The Balmer spectrum $\spc \dm(\mod R)$ is connected if and only if the Zariski spectrum $\spec R$ is so.
\end{thm}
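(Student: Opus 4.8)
The plan is to prove the two implications separately; the direction ``$\spc\dm(\mod R)$ connected $\Rightarrow\spec R$ connected'' is formal, while the converse carries all the content.

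For the formal direction I would argue the contrapositive. If $\spec R$ is disconnected, then $R\cong R_1\times R_2$ with $R_1,R_2\neq 0$; the central idempotents give $\mod R\simeq\mod R_1\times\mod R_2$, and since the cross terms $R_1\otimes_R R_2$ vanish this passes to a tensor triangulated equivalence $\dm(\mod R)\simeq\dm(\mod R_1)\times\dm(\mod R_2)$ carrying the unit $R$ to the unit $(R_1,R_2)$. By Balmer's computation of the spectrum of a product, $\spc\dm(\mod R)\cong\spc\dm(\mod R_1)\sqcup\spc\dm(\mod R_2)$, and both pieces are nonempty because $\dm(\mod R_i)\neq 0$; hence $\spc\dm(\mod R)$ is disconnected. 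The only external inputs here are the product formula for $\spc$ and the standard fact that $\spc\T\neq\emptyset$ whenever $\one\not\cong 0$.

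For the converse I would assume $\spec R$ connected and suppose, towards a contradiction, $\spc\dm(\mod R)=U_1\sqcup U_2$ with $U_1,U_2$ nonempty and open. Since $\spc\dm(\mod R)$ is quasi-compact (Balmer), each $U_i$ is clopen, hence closed in a quasi-compact space and so quasi-compact; having quasi-compact open complement, it is a Thomason subset. By Theorem \ref{B} this produces radical thick tensor ideals $\X_i=\spp^{-1}(U_i)$ with $\spp\X_i=U_i$, and from $U_1\cap U_2=\emptyset$ and $U_1\cup U_2=\spc\dm(\mod R)$ the correspondence of Theorem \ref{B} gives $\X_1\cap\X_2=\{X:\spp X=\emptyset\}$ while the thick subcategory generated by $\X_1\cup\X_2$ --- automatically a tensor ideal, since the $\X_i$ are --- equals $\dm(\mod R)$, so contains the unit $R$. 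The computational lemma to be set up first is: for right bounded complexes $X,Y$ with top nonvanishing cohomology in degrees $a,b$, the top cohomology of $X\ltensor_R Y$ is $H^a(X)\otimes_R H^b(Y)$, because in the spectral sequence converging to the cohomology of $X\ltensor_R Y$ all higher-$\mathrm{Tor}$ and lower-degree terms land in strictly lower total degree, leaving only the corner term $H^a\otimes_R H^b$, which survives. Combined with $\supp_R(M\otimes_R N)=\supp_R M\cap\supp_R N$ and Nakayama's lemma for finitely generated modules, this gives (a) a nonzero object of $\dm(\mod R)$ is never $\otimes$-nilpotent, so $\X_1\cap\X_2=0$, and (b) over a local ring the derived tensor product of two nonzero right bounded complexes is nonzero.

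Next I would descend to $\spec R$ through the homological support $\supp_R X:=\bigcup_n\supp_R H^n(X)=\{\p\in\spec R:X_\p\not\cong 0\text{ in }\dm(\mod R_\p)\}$, a specialization-closed subset, and set $V_i:=\bigcup_{X\in\X_i}\supp_R X$. Then $V_1\cap V_2=\emptyset$: if $\p$ lay in both, pick $X\in\X_1$ and $Y\in\X_2$ with $X_\p,Y_\p\neq 0$; by (b), $(X\ltensor_R Y)_\p=X_\p\ltensor_{R_\p}Y_\p\neq 0$, so $X\ltensor_R Y\neq 0$, contradicting $X\ltensor_R Y\in\X_1\cap\X_2=0$. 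Also $V_1\cup V_2=\spec R$: since $\supp_R$ is subadditive along triangles and invariant under shifts and retracts, $\{Z:\supp_R Z\subseteq V_1\cup V_2\}$ is a thick subcategory containing $\X_1\cup\X_2$, hence containing $R$, whence $\spec R=\supp_R R\subseteq V_1\cup V_2$. Finally, using that $R$ is noetherian, each $V_i$ is closed: any $\p\in V_i$ contains a minimal prime $\q$, which must again lie in $V_i$ (otherwise $\p\in\V(\q)\subseteq V_{3-i}$), so $V_i=\bigcup_{\q\in\min R\cap V_i}\V(\q)$ is a finite union of closed sets; and $V_i\neq\emptyset$ because $\X_i\neq 0$. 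Thus $\spec R=V_1\sqcup V_2$ splits into nonempty closed subsets, contradicting connectedness of $\spec R$.

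The forward direction is bookkeeping; the content sits in the converse, and within it the single non-formal point is the top-cohomology computation behind (a) and (b) --- in particular the assertion that over a local ring the derived tensor product of nonzero right bounded complexes is nonzero, where one must track the convergence of the spectral sequence carefully. Everything else (clopen $\Rightarrow$ Thomason, the lattice identities coming out of Theorem \ref{B}, subadditivity of $\supp_R$, and the reduction of $V_i$ to finitely many minimal primes) is routine, though one should also confirm the two inputs used in the forward direction.
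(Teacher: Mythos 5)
Your proof is correct, but it takes a genuinely different route from the paper's.

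The paper proves Corollary \ref{conn} as a special case of a finer statement (Theorem \ref{main}): for every bounded complex $C$, the comparison maps $\pp$ and $\s$ from \cite{MT} restrict to a bijection between the connected (and irreducible) components of $\spp C$ and those of $\supp C$; taking $C=R$ yields the corollary. The key technical work is Lemma \ref{sppclopen} and Lemma \ref{3}, establishing a bijection between clopen subsets of $\spp C$ and $\supp C$, driven by the correspondence $\max\spp C\leftrightarrow\min\supp C$ (Lemma \ref{minmax}). Both directions of the equivalence fall out of this single bijection.

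Your argument avoids the $\pp,\s$ machinery entirely. For the easy direction you exploit the splitting $R\cong R_1\times R_2$ coming from an idempotent, pass to a product of tensor triangulated categories, and invoke the product decomposition of $\spc$ together with nonemptiness of $\spc$ for nonzero $\T$. For the converse you invoke Theorem \ref{B} to turn a clopen decomposition $\spc\dm(\mod R)=U_1\sqcup U_2$ into radical tensor ideals $\X_i$, and then descend to $\spec R$ via ordinary cohomological support. The crucial computational input — that the top nonvanishing cohomology of $X\ltensor_R Y$ is $H^a(X)\otimes_R H^b(Y)$, whence no nonzero object is $\otimes$-nilpotent and (over a local ring) derived tensor of nonzero complexes is nonzero — is sound and is exactly what makes $\X_1\cap\X_2=0$, $V_1\cap V_2=\emptyset$, and $V_1\cup V_2=\spec R$ go through. (A minor stylistic point: the spectral sequence is overkill; replacing $X,Y$ by bounded-above free resolutions and computing the top cokernel directly gives the lemma with less overhead.)

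Trade-offs: your argument is more self-contained (no dependence on the structure theory of $\pp$ and $\s$ from \cite{MT}) and more elementary in spirit, but it is specific to $C=R$ and to connectedness. The paper's approach, at the cost of importing \cite{MT}, simultaneously handles $\spp C$ for an arbitrary bounded complex $C$, produces an actual bijection of connected components rather than a mere equivalence of connectedness, and delivers the irreducibility statement from the same lemmata. The paper also reuses the clopen machinery (Lemma \ref{sppclopen}) for the ring-object corollary later in Section 3, so the more structural route pays further dividends there.
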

\noindent Moreover, by using the latter theorem, we give a variant of a well-known result of Carlson \cite{C} in representation theory.

This paper is organized as follows.
In Section 2, we recall some basic materials from tensor triangulated geometry and topology theory. 
In Section 3, we prove our main theorems and give applications.
\section{Preliminaries}
Throughout this paper, let $R$ be a commutative noetherian ring.
For an ideal $I$ of $R$, we denote by $\V(I)$ the set of prime ideals of $R$ containing $I$.
We note that $\V(I)$ is a closed subset of the Zariski spectrum $\spec R$ and $\V(\p)$ is the closure $\overline{\{\p\}}$ of $\p$ in $\spec R$.
Denote by $\dm(R)$ (resp. $\db(R)$) the derived category of complexes $X$ of finitely generated $R$-modules with $\H^i(X)=0$ for all $i \gg 0$ (resp. $|i| \gg 0$).
Then $\dm(R)$ is an essentially small tensor triangulated category via derived tensor product $\ltensor_R$ with unit $R$.

First we will recall the definitions of a thick tensor ideal, a radical thick tensor ideal, and a prime thick tensor ideal.

\begin{dfn}
Let $\T$ be an essentially small tensor triangulated category.
\begin{enumerate}
\item
A subcategory $\X$ of $\T$ is called a {\it thick tensor ideal} of $\T$ if it is a thick subcategory of $\T$ and for any $T \in \T$ and $X \in \X$, the tensor product $T \otimes X$ belongs to $\X$.
\item
For a thick tensor ideal $\X$ of $\T$, we denote by $\sqrt{\X}$ the {\it radical} of $\X$, that is, the subcategory of $\T$ consisting of objects $X$ such that the $n$-fold tensor product $X \otimes X \otimes \cdots \otimes X$ belongs to $\X$ for some integer $n \ge 1$.
\item
A thick tensor ideal $\X$ of $\T$ is called {\it radical} if $\sqrt{\X}=\X$.
\item
A proper thick tensor ideal $\P$ of $\T$ is called {\em prime} if $X\otimes Y$ is in $\P$, then so is either $X$ or $Y$.
The set of prime thick tensor ideals of $\T$ is denoted by $\spc\T$ and we call it the {\em Balmer spectrum} of $\T$.
\end{enumerate}
\end{dfn}

For a thick tensor ideal $\X$, its radical $\sqrt{\X}$ is a thick tensor ideal.
Indeed, by \cite[Lemma 4.2]{B}, it is equal to the intersection of all prime thick tensor ideals containing $\X$. 

\begin{ex}\label{ex1}
For a complex $X \in \dm(R)$, define the {\it support} of $X$ by
\begin{align*}
\supp X &:=\{\p \in \spec R \mid X_\p \not\cong 0 \mbox{ in } \dm(R_\p)\} \\
&= \bigcup_{ n \in \mathbb{Z}} \supp \H^n(X).
\end{align*}
Moreover, for a class $\X$ of objects of $\dm(R)$, denote by $\supp \X := \bigcup_{X \in \X}\supp X$.
Then, for a subset $W$ of $\spec R$, 
$$
\supp^{-1}W:=\{X \in \dm(R) \mid \supp X \subseteq W\}
$$
is a thick tensor ideal of $\dm(R)$.
Furthermore, if we take $W:=\{ \q \in \spec R \mid \q \not\subseteq \p \}$ for a fixed $\p$, then
$$
\s(\p) := \supp^{-1} W =\{X \in \dm(R) \mid X_\p \cong 0 \mbox{ in } \dm(R_\p)\}
$$
is a prime thick tensor ideal of $\dm(R)$.
For the proof, please see \cite{MT} for instance.
\end{ex}

Balmer \cite{B} defined a topology on $\spc \T$ as follows.

\begin{dfn}
\begin{enumerate}
\item
For an object $X \in \T$, the {\it Balmer support} of $X$, denoted by $\spp X$, is defined as the set of prime thick tensor ideals not containing $X$. 
Set $\u(X):= \spc \T \setminus \spp X$.
\item
Define a topology on $\spc \T$ whose open basis is $\{\u(X) \mid X \in \T \}$.
\end{enumerate}
\end{dfn}
We always consider this topology on the Balmer spectrum.



Next, let us recall some notions from topology theory for later use.

\begin{dfn}
Let $X$ be a topological space.
\begin{enumerate}
\item
We say that a subspace of $X$ is a {\it clopen subset} if it is closed and open in $X$.
\item
A subspace $W$ of $X$ is said to be {\it specialization closed} if for any $x \in W$, $\overline{\{x\}} \subseteq W$ holds.
\item
A subspace $W$ of $X$ is said to be {\it generalization closed} if for any $x \in W$ and $y \in X$, $x \in \overline{\{y\}}$ implies $y \in W$.
\item
We say that $X$ is {\it connected} if it contains no non-trivial clopen subset.
For a subspace $Y$ of $X$, we say that $Y$ is a {\it connected subspace} of $X$ if it is a connected space by induced topology. 
Moreover, a {\it connected component} of $X$ is a maximal connected subspace of $X$.
\item
We say that $X$ is {\it irreducible} if it can not be the union of two proper closed subspaces.
For a subspace $Y$ of $X$, we say that $Y$ is an {\it irreducible subspace} of $X$ if it is an irreducible space by induced topology. 
Moreover, an {\it irreducible component} of $X$ is a maximal irreducible subspace of $X$, which is automatically closed since the closure of irreducible subspace is also irreducible.
\item
We say that $X$ is {\it noetherian} if every descending chain of closed subspaces stabilizes.
\end{enumerate}
\end{dfn}

\begin{rem}
\begin{enumerate}[\rm(1)]
\item
A subspace is generalization closed if and only if its complement is specialization closed.
\item
Let $X \supseteq Y \supseteq Z$ be subspaces.
If $Y$ is specialization closed in $X$ and $Z$ is specialization closed in $Y$, then $Z$ is specialization closed in $X$.
\item
Let $W$ be a subspace of $\spec R$.
Then $W$ is specialization closed (resp. generalization closed) in $\spec R$ if and only if 
\begin{align*}
\p \in W,\,\, \p \subseteq \q \Longrightarrow \q \in W. \\
(\mbox{resp. }\q \in W,\,\, \p \subseteq \q \Longrightarrow \p \in W.)
\end{align*}
\item (Balmer \cite{B})
Let $\T$ be an essentially small tensor triangulated category and $W$ a subspace of $\spc \T$.
Then $W$ is specialization closed (resp. generalization closed) in $\spc \T$ if and only if 
\begin{align*}
\P \in W,\,\, \P \supseteq \Q \Longrightarrow \Q \in W. \\
(\mbox{resp. } \Q\in W,\,\, \P \supseteq \Q \Longrightarrow \P \in W.)
\end{align*}
\end{enumerate}
\end{rem}

\begin{lem}\label{6}
Let $X$ be a topological space. 
Then every connected component of $X$ is both specialization closed and generalization closed.
\end{lem}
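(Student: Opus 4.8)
The plan is to show that a connected component $C$ of an arbitrary topological space $X$ is closed under both specialization and generalization by exploiting the well-known fact that connected components are closed, together with the observation that the closure of a connected set is connected.

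First I would recall the standard facts I intend to use: (i) if $A \subseteq X$ is connected and $A \subseteq B \subseteq \overline{A}$, then $B$ is connected; in particular $\overline{A}$ is connected; and (ii) every connected component of $X$ is closed (this follows from (i) applied to the component itself, by maximality). These are elementary and can be cited or proved in one line.

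For \emph{specialization closedness}: let $C$ be a connected component and take $x \in C$. By fact (ii), $C$ is closed, so $\overline{\{x\}} \subseteq \overline{C} = C$. Hence $C$ is specialization closed.

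For \emph{generalization closedness}: let $x \in C$ and suppose $y \in X$ satisfies $x \in \overline{\{y\}}$. Consider the set $C \cup \overline{\{y\}}$. Since $y \in \overline{\{y\}}$ and $\overline{\{y\}}$ is irreducible, hence connected, and $C$ is connected, and the two sets meet (at $x$), their union is connected. By maximality of the component $C$ we get $\overline{\{y\}} \subseteq C$, and in particular $y \in C$. Thus $C$ is generalization closed.

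The only mildly delicate point — the main "obstacle," though it is quite mild — is making sure the union-of-two-connected-sets-sharing-a-point argument is invoked with the correct connectedness input for $\overline{\{y\}}$: a closure of a single point is always connected (being the closure of the connected set $\{y\}$ by fact (i)), so no separation hypothesis on $X$ is needed. With that in hand the argument is purely formal and requires no calculation.
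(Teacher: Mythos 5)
Your proof is correct and uses essentially the same core idea as the paper: that $\overline{\{y\}}$ is connected (being irreducible, or equivalently the closure of a connected singleton), together with the fact that the union of two intersecting connected sets is connected, and maximality of the component. The only cosmetic difference is in the specialization part, where you invoke the closedness of components as a black box and then use $\overline{\{x\}} \subseteq \overline{C} = C$, whereas the paper re-runs the union argument there too; and in the generalization part you argue directly rather than by contradiction via the complement, which is arguably cleaner.
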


\begin{proof}
Fix a connected component $O$ of $X$.
For $x \in O$, $\overline{\{x\}}$ is irreducible and in particular connected.
Since $O \cap \overline{\{x\}}$ is non-empty, $O \cup \overline{\{x\}}$ is connected.
Thus, $O \cup \overline{\{x\}}$ must be equal to $O$, and hence $\overline{\{x\}} \subseteq O$.
This shows that $O$ is specialization closed in $X$.

For $x \not\in O$, assume that there exists $y \in \overline{\{x\}}$ with $y \in O$.
Then $\overline{\{x\}} \cap O$ is non-empty as it contains $y$.
Therefore, the same argument as above shows that $\overline{\{x\}} \subseteq O$.
This gives a contradiction to $x \notin O$.
Thus, $X \setminus O$ is specialization closed in $X$ and hence $O$ is generalization closed in X.
\end{proof}

\section{Main theorems}
In this section, we discuss noetherianity, connectedness, and irreducibility of the Balmer spectrum $\spc \dm(R)$.

\subsection{Noetherianity}
Besides, we show the following theorem which gives a sufficient condition for noetherianity of the Balmer spectrum $\spc \dm(R)$.

\begin{thm}\label{noeth}
If the Balmer spectrum $\spc \dm(R)$ is a noetherian topological space, then $\spec R$ is a finite set (i.e., semi-local ring with Krull dimension at most $1$).
\end{thm}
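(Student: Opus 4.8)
The plan is to prove the contrapositive: assuming $\spec R$ is infinite, I will construct an infinite strictly descending chain of closed subsets of $\spc \dm(R)$, contradicting noetherianity. The natural source of such closed subsets is the complements of the basic opens $\u(X)$, i.e., the Balmer supports $\spp X$, together with finite unions and intersections thereof; but a cleaner approach is to use the prime ideals $\s(\p)$ from Example \ref{ex1}. First I would recall that for a subset $W\subseteq\spec R$ the assignment $\p\mapsto\s(\p)$ gives a map $\spec R\to\spc\dm(R)$, and I would want to understand how closed subsets of $\spc\dm(R)$ pull back along it; in particular the set $\{\s(\p)\mid \p\in V\}$ for suitable $V\subseteq\spec R$ should be shown to be (relatively) closed by exhibiting it as a Balmer support $\spp X$ for an explicit complex $X$ built from $R/I$ for an ideal $I$ with $\V(I)=V$.

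Second, since $\spec R$ is an infinite noetherian space, it contains an infinite set of primes, and by a standard argument one can find either an infinite strictly ascending or strictly descending chain of primes, or an infinite antichain; in any case one extracts an infinite strictly descending chain $V_1\supsetneq V_2\supsetneq\cdots$ of \emph{closed} subsets of $\spec R$ (for instance: if $\spec R$ is infinite then it is not a finite union of its irreducible components in a way that bounds things, and one can peel off points or use that an infinite noetherian space of dimension $\geq 1$ or with infinitely many minimal primes yields such a chain). Concretely: an infinite noetherian space has either infinitely many irreducible components or an irreducible component of dimension $\geq 1$; the first gives an infinite descending chain $Z\setminus\{\text{finitely many components}\}$ — wait, those aren't closed — so better to take $V_n = \V(\p_1)\cup\cdots$ no; the right move is: in the first case take closed sets $\overline{\{\p_{n+1}\}}\cup\overline{\{\p_{n+2}\}}\cup\cdots$ which need not be closed, so instead use that with infinitely many minimal primes one has $\V(\p_1\p_2\cdots\p_n)$ giving an \emph{ascending} chain, whose complement-type manipulation... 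I will instead simply invoke that any infinite noetherian $T_0$ space admits an infinite strictly descending chain of closed subsets, which is a purely point-set fact I can prove directly by transfinite peeling of closed points.

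Third, I must transport such a chain $V_1\supsetneq V_2\supsetneq\cdots$ in $\spec R$ to a chain in $\spc\dm(R)$. For each closed $V_n=\V(I_n)$, I claim the subset $\overline{\spp(R/I_n)}$ — or more precisely $\spp$ of the complex $R/I_n$ — is closed in $\spc\dm(R)$ and that these form a strictly descending chain; strictness will follow because $\s(\p)$ distinguishes the $V_n$: $\s(\p)\in\spp(R/I_n)$ iff $R/I_n$ is not in $\s(\p)$ iff $(R/I_n)_\p\not\cong 0$ iff $\p\in\V(I_n)=V_n$. Thus $\spp(R/I_n)\cap\{\s(\p):\p\in\spec R\}$ decreases strictly, forcing the $\spp(R/I_n)$ themselves to decrease strictly. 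Since each $\spp(R/I_n)$ is closed (it is the complement of the basic open $\u(R/I_n)$), this is the desired infinite descending chain of closed subsets, contradicting noetherianity of $\spc\dm(R)$.

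The main obstacle I anticipate is the transport step — specifically verifying that $\spp(R/I_n)$ behaves well enough: I need that the map $\p\mapsto\s(\p)$ is injective (so that strictness is genuinely detected) and that $\s(\p)\in\spp X \iff \p\in\supp X$ for $X\in\dm(R)$, i.e. that Balmer support restricted to the image of $\spec R$ agrees with ordinary support. This identification, together with injectivity of $\s(-)$, is presumably established in \cite{MT} or follows from the $\supp^{-1}$ description in Example \ref{ex1}; once it is in hand, the rest is the elementary point-set extraction of a descending chain from an infinite noetherian space, and the conclusion that $\spec R$ finite means a semi-local ring of Krull dimension $\leq 1$ is immediate since a finite $T_0$ space has all points of height $\leq$ its size and only finitely many maximal ideals.
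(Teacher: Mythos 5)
Your plan hinges on the assertion, stated outright near the end of your second step, that ``any infinite noetherian $T_0$ space admits an infinite strictly descending chain of closed subsets.'' This is false: the descending chain condition on closed subsets \emph{is} the definition of a noetherian space, so no noetherian space (infinite or not) admits such a chain. Concretely, $\spec \mathbb{Z}$ is infinite, noetherian, and $T_0$, yet its closed sets are exactly the finite sets of closed points together with the whole space, and no infinite strictly descending chain exists. Since $R$ is noetherian, $\spec R$ is a noetherian space, so the source chain $V_1 \supsetneq V_2 \supsetneq \cdots$ you want to transport simply does not exist, and the argument collapses at its second step. Your transport step and the identification $\s(\p)\in\spp X\Leftrightarrow \p\in\supp X$ (which does hold, via \cite{MT}) are fine, but they cannot rescue a chain that isn't there.

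The structural point you are missing is that the relevant chains must live among \emph{specialization-closed} (Thomason) subsets of $\spec R$, not closed subsets, and noetherian $\spec R$ can certainly have infinite descending chains of specialization-closed subsets. To realize such sets as supports of objects, one must use genuinely unbounded complexes in $\dm(R)$ --- this is exactly what the paper does. After extracting an infinite antichain $\{\p_i\}$ of primes, the paper sets $X_n := \bigoplus_{i\ge n} R/\p_i[i]$, an object of $\dm(R)\setminus\db(R)$ with $\supp X_n = \bigcup_{i\ge n}\V(\p_i)$, a specialization-closed but non-closed subset. Then $\spp X_1 \supseteq \spp X_2 \supseteq\cdots$ is a descending chain of closed subsets of $\spc\dm(R)$, noetherianity forces $\spp X_n = \spp X_{n+1}$ for some $n$, Balmer's classification gives $\sqrt{\langle X_n\rangle}=\sqrt{\langle X_{n+1}\rangle}$, and applying $\supp$ yields $\bigcup_{i\ge n}\V(\p_i)=\bigcup_{i\ge n+1}\V(\p_i)$, contradicting the antichain condition. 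If you restrict to bounded complexes $R/I_n$ as in your proposal, every support is closed in $\spec R$, and noetherianity of $\spec R$ prevents any contradiction from arising. The phenomenon the theorem is detecting is precisely the gap between $\dm(R)$ and $\db(R)$, so any proof that only manufactures bounded test objects cannot work.
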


Before proving this, we give the following easy lemma.

\begin{lem}
If $\spec R$ has infinitely many prime ideals, then there is a countable set of prime ideals which have no inclusion relations among them. 
\end{lem}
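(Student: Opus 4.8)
The plan is to build the required countable antichain in $\spec R$ by a careful case analysis based on the Krull dimension of $R$, using only elementary commutative algebra. The statement asks: if $\spec R$ is infinite, produce a countably infinite set of pairwise incomparable primes. Since $R$ is noetherian, $\spec R$ has finitely many minimal primes and finite Krull dimension is \emph{not} assumed here, so I should not rely on dimension being finite; instead I will exploit the noetherian condition through prime avoidance and the finiteness of minimal primes.

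\medskip

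First I would reduce to the case where $R$ is a domain. Since $\spec R = \bigcup_{i=1}^n \V(\p_i)$ with $\p_1,\dots,\p_n$ the (finitely many) minimal primes, one of the $\V(\p_i)$ is infinite; replacing $R$ by $R/\p_i$ changes neither the incomparability relation among primes lying in $\V(\p_i)$ nor the goal. So assume $R$ is a noetherian domain with infinitely many primes. Next I would locate a height-one prime: if every nonzero prime had height $\ge 2$, I would still be fine, but the cleanest route is to pick any nonzero $x \in R$ and note $\V(x)$ is a nonempty proper closed set; either $\V(x)$ is infinite, or $\spec R \setminus \V(x)$ contains infinitely many primes. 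Iterating, the main structural input I want is: there exist infinitely many \emph{height-one} primes, OR one can climb to a situation where infinitely many primes sit strictly above a common prime $\q$ and one works inside $R/\q$. In a noetherian domain with infinitely many primes, Krull's principal ideal theorem guarantees infinitely many height-one primes unless... actually it always does, since each nonzero nonunit lies in a height-one prime and finitely many height-one primes would force $\spec R$ to have bounded structure — but to be safe I would phrase the construction recursively.

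\medskip

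The recursive construction I would carry out: suppose we have chosen primes $\p_1, \dots, \p_k$, pairwise incomparable, together with a prime $\q$ (a "base") such that infinitely many primes strictly contain $\q$ but $\q \not\subseteq \p_j$ and $\p_j \not\subseteq \q$ is arranged, OR more simply — here is the clean version — I maintain an infinite set $\Sigma$ of primes all containing some fixed prime $\q_k$, with $\p_1,\dots,\p_k$ chosen so that no $\p_j$ lies in $\V(\q_k)$. Pick $\p_{k+1} \in \Sigma$ minimal over $\q_k$ (exists since $\Sigma$ is infinite and $R$ noetherian, so $\Sigma$ has a member not containing infinitely many others — use that $\V(\q_k)$ has finitely many minimal primes, at least one, call it $\q_{k+1}$, below which... ). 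The key move: by prime avoidance, since $\Sigma \setminus \V(\q_{k+1})$ is infinite (as $\q_{k+1}$ has only finitely many primes immediately above contributing, and the finitely many minimal primes of $\V(\q_k)$ other than $\q_{k+1}$ carry the rest), choose $\q_{k+1}$ strictly between $\q_k$ and the $\p$'s appropriately, and reset $\Sigma$ to an infinite subset avoiding $\V(\p_{k+1})$. Incomparability of $\p_{k+1}$ with each $\p_j$ ($j \le k$) follows because $\p_j \not\supseteq \q_k$ while $\p_{k+1} \supseteq \q_k$, and $\p_{k+1} \not\supseteq \p_j$ because $\p_{k+1}$ was drawn from a set avoiding $\V(\p_j)$.

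\medskip

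\textbf{Main obstacle.} The delicate point is arranging the bookkeeping so that at every stage the "reservoir" $\Sigma$ of unused primes stays infinite \emph{and} the newly chosen prime is genuinely incomparable to \emph{all} previously chosen ones, not just to the base. This is exactly where noetherianity is essential: each $\V(I)$ has only finitely many minimal primes, so throwing away $\V(\p_1) \cup \dots \cup \V(\p_k)$ from an infinite specialization-closed set leaves an infinite set (the discarded part is a finite union of "cones", and an infinite noetherian space cannot be covered by finitely many proper closed cones if its generic part is infinite — more precisely, one tracks that $\spec R/\q_k$ is infinite at each step). I expect the write-up to spend most of its effort verifying this invariant is preserved, via repeated application of the fact that a noetherian space which is an infinite union is not the union of the closures of finitely many of its non-generic points. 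Everything else — prime avoidance, passing to quotients — is routine.
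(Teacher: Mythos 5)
Your argument has a genuine gap, and you also missed the much shorter route the paper takes. The paper splits on the number of maximal ideals: if $R$ has infinitely many maximal ideals, they already form the desired antichain and you are done; if $R$ has only finitely many maximal ideals, then $R$ is semi-local and hence, being noetherian, has finite Krull dimension. Once the dimension is finite, pigeonhole on height produces infinitely many primes of some common height $n$, and in a noetherian ring primes of the same finite height are pairwise incomparable (a strict inclusion $\p\subsetneq\q$ forces $\h\q\ge\h\p+1$). Your explicit decision not to rely on finite dimension (``finite Krull dimension is not assumed here, so I should not rely on dimension being finite'') is exactly the wrong reflex here: finite dimension is not an assumption but a \emph{consequence} of having disposed of the infinitely-many-maximal-ideals case first. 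By refusing that reduction you forced yourself into the recursive construction.

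The recursive construction, as you wrote it, does not hold up. The statement ``$\q_{k+1}$ has only finitely many primes immediately above contributing'' is false in general: in a noetherian ring a prime can be covered by infinitely many primes (already $(0)$ in $\mathbb{Z}$ is covered by every $(p)$). Likewise ``$\V(\q_k)$ has finitely many minimal primes, at least one, call it $\q_{k+1}$'' is confused, since $\V(\q_k)$ has a unique minimal element, namely $\q_k$ itself; it is unclear what $\q_{k+1}$ is and why it differs from $\q_k$. Because the reservoir invariant relies on these claims, the recursion is not shown to run. Your other sketch --- reduce to a noetherian domain with infinitely many primes and argue it has infinitely many height-one primes, which then form an antichain --- \emph{is} a workable alternative to the paper's proof, but you explicitly declined to commit to it and left the crucial finiteness step (``finitely many height-one primes would force $\spec R$ to have bounded structure'') as a hand-wave. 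Either commit to that route and prove it (via prime avoidance: if $\p_1,\dots,\p_n$ were all the height-one primes, every nonzero nonunit lies in some $\p_i$, so every maximal ideal is one of the $\p_i$, forcing $\dim R\le 1$ and $\spec R$ finite), or, better, adopt the paper's two-case argument, which is shorter and avoids the reduction to a domain entirely.
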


\begin{proof}
If $R$ has infinitely many maximal ideals, then we can take such a set as a countable set of maximal ideals.

Assume that $R$ has only finitely many maximal ideals.
Then $R$ has finite Krull dimension.
Since $R$ has infinitely many prime ideals, there is a non-negative integer $n$ such that the set $\{\p \in \spec R \mid \h \p = n \}$ has infinitely many elements.
Thus, a countable subset of this set satisfies the property what we want.
\end{proof}

For a complex $X \in \dm(R)$, denote by $\langle X \rangle$ the smallest thick tensor ideal of $\dm(R)$ containing $X$.

\begin{proof}[Proof of Theorem \ref{noeth}]
Assume that $\spc \dm(R)$ is noetherian.
Since any descending chain of closed subsets of $\spc \dm(R)$ stabilizes, in particular, any descending chain $\spp X_1 \supseteq \spp X_2 \supseteq \spp X_3 \cdots$ stabilizes.
Then by using Theorem \ref{B}, every descending chain $\sqrt{\langle X_1 \rangle} \supseteq \sqrt{\langle X_2 \rangle} \supseteq \sqrt{\langle X_3 \rangle} \supseteq \cdots$ stabilizes.
Indeed, the one-to-one correspondence in the theorem assigns $\spp X$ to $\sqrt{\langle X \rangle}$.

Assume furthermore that $R$ has infinitely many prime ideals.
From the previous lemma, we can take countably many prime ideals $\{\p_n\}_{n \ge 1}$ which have no inclusion relation to each other.
Set $X_n := \bigoplus_{i \ge n}R/\p_i[i]$ to be the complex
$$
X_n:=(\cdots  \xrightarrow{0} R/\p_{n+2} \xrightarrow{0} R/\p_{n+1} \xrightarrow{0} R/\p_{n} \to 0 \cdots).
$$
Here, $R/\p_i$ fit into the $i$-th component.
Then $X_n$ belongs to $\dm(R)$, and $X_{n+1}$ is a direct summand of $X_n$ for each integer $n \ge 1$.
Therefore, we have a descending chain $\sqrt{\langle X_1 \rangle} \supseteq \sqrt{\langle X_2 \rangle} \supseteq \sqrt{\langle X_3 \rangle} \supseteq \cdots$ of radical thick tensor ideals.
From the above argument, we get an equality $\sqrt{\langle X_n \rangle} = \sqrt{\langle X_{n+1} \rangle}$ for some integer $n\ge 1$.
Taking $\supp$, we obtain 
$$
\bigcup_{i \ge n}\V(\p_i) = \supp \sqrt{\langle X_n \rangle} = \supp \sqrt{\langle X_{n+1} \rangle} = \bigcup_{i \ge n+1}\V(\p_i).
$$
Hence, there is an integer  $m \ge n+1$ such that $\p_m \subseteq \p_n$.
This gives a contradiction.
\end{proof}


\begin{rem}
If $R$ is artinian, then by \cite[Theorem 6.5]{MT}, $\spc \dm(R)$ is homeomorphic to $\spec R$.
In particular, $\spc \dm(R)$ is a noetherian topological space.
\end{rem}

\subsection{Connectedness}
In this subsection, we mainly discuss connectedness of the Balmer spectrum $\spc \dm(R)$.
We use the following pair of maps defined in \cite{MT} to compare two spectra:
$$
\pp: \spc \dm(R) \rightleftarrows \spec R :\s.
$$
Here the map $\s$ was defined in Example \ref{ex1}.
Let me list some basic properties of these maps in the following proposition. 
However we don't give the definition of the map $\pp$, the last statement of the following proposition gives the characterization of this map.

\begin{prop}{\rm{\cite[Theorem 3.9, Corollary 3.10, Theorem 4.5]{MT}}}{\label{maps}}
\begin{enumerate}[\rm(1)]
\item
Both maps $\pp$ and $\s$ are order-reversing.
\item
$\pp$ is continuous.
\item
$\pp \cdot \s = 1$.
In particular, $\pp$ is surjective and $\s$ is injective.
\item
For a prime thick tensor ideal $\P$ of $\dm(R)$, one has 
$$
\s\pp(\P)=\supp^{-1} \supp(\P) \supseteq \P. 
$$
\item
For a prime thick tensor ideal $\P$ of $\dm(R)$, one has 
$$
\supp \P = \{\p \in \spec R \mid \p \not\subseteq \pp(\P)\}.
$$
\end{enumerate}
\end{prop}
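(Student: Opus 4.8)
The plan is to fix a concrete description of $\pp$, settle the formal assertions directly, and reduce the rest to a single structural input. I would realize $\pp$ as the Balmer comparison map: for $\P\in\spc\dm(R)$ and $r\in R$, write $K_r:=\operatorname{cone}(R\xrightarrow{r}R)$ for the two-term Koszul complex on $r$, and set $\pp(\P):=\{r\in R\mid K_r\notin\P\}$. A standard Balmer-type computation (using the triangles coming from factoring $rs$, the fact that $K_r\ltensor_R K_s$ is the Koszul complex on the sequence $r,s$, and the analogous relation for $r+s$) shows $\pp(\P)$ is a prime ideal, so $\pp$ is well defined; also $\supp K_r=\V(rR)$.

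Parts (1)--(3) are then formal. For (1), $\pp$ is order-reversing immediately from its definition, and $\s$ is order-reversing because $R_{\p}$ is a localization of $R_{\q}$ whenever $\p\subseteq\q$, so $X_{\q}\cong 0$ forces $X_{\p}\cong 0$. For (2), one has $r\notin\pp(\P)\iff K_r\in\P$, hence $\pp^{-1}(\spec R\setminus\V(rR))=\u(K_r)$; since the sets $\spec R\setminus\V(rR)$ form an open basis of $\spec R$, $\pp$ is continuous. For (3), localizing $K_r$ at $\p$ gives $\operatorname{cone}(R_{\p}\xrightarrow{r/1}R_{\p})$, which vanishes exactly when $r/1$ is a unit of $R_{\p}$, i.e.\ when $r\notin\p$; thus $\pp(\s(\p))=\p$, and surjectivity of $\pp$ together with injectivity of $\s$ follows formally.

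For (5), the inclusion $\{\p\mid\p\not\subseteq\pp(\P)\}\subseteq\supp\P$ is immediate: picking $r\in\p\setminus\pp(\P)$ gives $K_r\in\P$ with $\p\in\V(rR)\subseteq\supp\P$. For the converse, take $\p\in\supp\P$ and $X\in\P$ with $\p\in\supp X$, so $\p\supseteq\ann\H^n(X)=(r_1,\dots,r_k)$ for some $n$. If $\p\subseteq\pp(\P)$, then each $K_{r_i}\notin\P$, so primeness of $\P$ forces $K:=K_{r_1}\ltensor_R\cdots\ltensor_R K_{r_k}\notin\P$; but $\supp K=\V(r_1,\dots,r_k)=\supp\H^n(X)\subseteq\supp X\subseteq\supp\P$, which (see below) forces $K\in\P$ --- a contradiction. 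Hence (5) holds, and (4) follows from it: by (5) the subset $\supp\P$ is specialization closed and avoids $\pp(\P)$, so, since each $\supp X$ is specialization closed, $\pp(\P)\notin\supp X\iff\supp X\subseteq\supp\P$; that is, $\s(\pp(\P))=\{X\mid X_{\pp(\P)}\cong 0\}=\supp^{-1}(\supp\P)$, and this contains $\P$ because $X\in\P$ gives $\supp X\subseteq\supp\P$.

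The one substantial ingredient is the step invoked in (5), and implicitly in (4): a complex whose support lies inside $\supp X$ for some $X\in\P$ must itself belong to $\P$. This is not soft --- it rests on the classification of thick tensor ideals of $\dm(R)$ by specialization-closed subsets of $\spec R$ proved in \cite{MT}, equivalently on the identity $\P=\supp^{-1}(\supp\P)$ for a thick tensor ideal $\P$. I expect that classification to be the main obstacle; once it is in hand, the rest of the argument is bookkeeping with supports and localizations.
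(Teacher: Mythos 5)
The paper does not prove this proposition at all---it is cited wholesale from [MT, Theorem~3.9, Corollary~3.10, Theorem~4.5], so there is no proof in the paper to compare against. Your reconstruction of $\pp$ as Balmer's comparison map $\P \mapsto \{r \in R \mid \operatorname{cone}(r) \notin \P\}$ is the right one, and the arguments for (1)--(3), the forward inclusion in (5), and the derivation of (4) from (5) are sound.

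However, your concluding paragraph contains a genuine error of identification. You claim the ``one substantial ingredient'' is that a complex whose support lies in $\supp X$ for some $X \in \P$ must lie in $\P$, and that this is equivalent to ``the identity $\P = \supp^{-1}(\supp \P)$ for a thick tensor ideal $\P$,'' which you attribute to a classification of thick tensor ideals of $\dm(R)$ by specialization-closed subsets. That identity is \emph{false} in $\dm(R)$, and indeed the whole point of part (4) of the very proposition you are proving---namely that $\supp^{-1}\supp(\P) \supseteq \P$ with possible strict inclusion---is that the Hopkins--Neeman--Thomason classification breaks down in the right bounded setting. This failure is precisely what makes [MT] and the present paper nontrivial. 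The lemma actually needed, and actually available, is the much weaker [MT, Proposition~2.9]: for a \emph{bounded} complex $C \in \db(R)$ and a thick tensor ideal $\X$ of $\dm(R)$, one has $C \in \X$ iff $\supp C \subseteq \supp \X$. Your argument for (5) happens to survive because the complex $K = K_{r_1} \ltensor_R \cdots \ltensor_R K_{r_k}$ you apply the lemma to is a Koszul complex, hence bounded; but as written your justification both overstates what [MT] proves and contradicts statement (4). You should replace the invocation of the false global identity with the bounded-complex form of the containment criterion and note explicitly that boundedness of $K$ is what licenses the step.
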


\begin{rem}
As it has been shown in \cite[Theorem 4.7]{MT}, $\s$ is not continuous in general.
\end{rem}

The following theorem is the main result of this subsection.

\begin{thm}\label{main}
Let $C \in \db(R)$ be a bounded complex. 
\begin{enumerate}[\rm(1)]
\item
There is a one-to-one correspondence
$$
\xymatrix{
\left\{ 
\begin{matrix}
\text{connected components of $\spp C$}
\end{matrix}
\right\}  
\ar@<0.5ex>[r]^-\pp &
\ar@<0.5ex>[l]^-{\pp^{-1}}
}
\!\!\!
\left\{ 
\begin{matrix}
\text{connected components of $\supp C$}
\end{matrix}
\right\}.
$$
\item
There is a one-to-one correspondence
$$
\xymatrix{
\left\{ 
\begin{matrix}
\text{irreducible components of $\spp C$}
\end{matrix}
\right\}  
\ar@<0.5ex>[r]^-\pp &
\ar@<0.5ex>[l]^-{\pp^{-1}}
}
\!\!\!
\left\{ 
\begin{matrix}
\text{irreducible components of $\supp C$}
\end{matrix}
\right\}.
$$
\end{enumerate}
\end{thm}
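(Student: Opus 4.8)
The plan is to pin down the subspace $\spp C$ of $\spc\dm(R)$ explicitly in terms of $\supp C$ and then read off both bijections at once. I will use two facts:
(i) for $C\in\db(R)$, $\spp C=\pp^{-1}(\supp C)$;
(ii) for every $\p\in\spec R$, $\pp^{-1}(\V(\p))=\overline{\{\s(\p)\}}$ (closure in $\spc\dm(R)$).
Fact (ii) is immediate: directly from the definition of the topology, $\Q\in\overline{\{\s(\p)\}}$ iff $X\in\Q$ implies $X\in\s(\p)$ for all $X$, i.e. iff $\Q\subseteq\s(\p)$; and by Proposition~\ref{maps}(5), $\Q\subseteq\s(\p)$ iff $\p\notin\supp\Q$ iff $\p\subseteq\pp(\Q)$ iff $\pp(\Q)\in\V(\p)$.

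For fact (i), the inclusion $\pp^{-1}(\supp C)\subseteq\spp C$ holds for any object: if $C\in\P$, then $\supp C\subseteq\supp\P=\{\q\mid\q\not\subseteq\pp(\P)\}$, which does not contain $\pp(\P)$, so $\pp(\P)\notin\supp C$. The reverse inclusion $\spp C\subseteq\pp^{-1}(\supp C)$ is equivalent to the assertion that $\supp D\subseteq\supp\Q$ implies $D\in\Q$, for every $D\in\db(R)$ and every prime thick tensor ideal $\Q$ — and this is where boundedness is genuinely used. I would prove it by reducing, via the truncation triangles and prime filtrations of the cohomologies $\H^n(C)$ together with the fact that $R/\p$ is a direct summand of $R/\p\ltensor_R R/\p$ (and of $R/\q\ltensor_R R/\p$ whenever $\q\subseteq\p$), to the statement that $R/\p\in\Q$ whenever $\p\in\supp\Q$; equivalently, that $\supp\Q$ is already realised by bounded complexes in $\Q$. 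I expect this last point to be the main obstacle; it is a statement about the comparison map $\pp$ and I would cite it from \cite{MT} if it is available there.

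Granting (i) and (ii): since $C\in\db(R)$, the set $\supp C=\bigcup_{n}\supp\H^n(C)$ is closed in the noetherian space $\spec R$, hence $\supp C=\V(\q_1)\cup\cdots\cup\V(\q_r)$ where $\q_1,\dots,\q_r$ are its finitely many minimal primes, and the $\V(\q_i)$ are exactly the irreducible components of $\supp C$. By (i) and (ii),
\[
\spp C=\pp^{-1}(\supp C)=\bigcup_{i=1}^{r}\pp^{-1}(\V(\q_i))=\bigcup_{i=1}^{r}\overline{\{\s(\q_i)\}}.
\]
Each $\overline{\{\s(\q_i)\}}$ is irreducible and closed; for $i\neq j$ we have $\q_i\not\subseteq\q_j$, hence $\s(\q_i)\not\subseteq\s(\q_j)$ (apply the order-reversing map $\pp$ and $\pp\cdot\s=1$, Proposition~\ref{maps}(1),(3)), hence $\overline{\{\s(\q_i)\}}\not\subseteq\overline{\{\s(\q_j)\}}$. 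So $\overline{\{\s(\q_1)\}},\dots,\overline{\{\s(\q_r)\}}$ are precisely the irreducible components of $\spp C$. Since $\pp$ is surjective (Proposition~\ref{maps}(3)), it carries $\overline{\{\s(\q_i)\}}=\pp^{-1}(\V(\q_i))$ onto $\V(\q_i)$, while $\V(\q_i)\mapsto\pp^{-1}(\V(\q_i))=\overline{\{\s(\q_i)\}}$; these are mutually inverse, which proves (2).

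For (1), the connected components of $\supp C$ are the sets $Y_\alpha=\bigcup_{i\in S_\alpha}\V(\q_i)$, where $\{S_\alpha\}$ is the partition of $\{1,\dots,r\}$ obtained by grouping the irreducible components into connected components; each $Y_\alpha$ is clopen in $\supp C$. Then $\pp^{-1}(Y_\alpha)=\bigcup_{i\in S_\alpha}\overline{\{\s(\q_i)\}}$ is clopen in $\spp C=\pp^{-1}(\supp C)$ (preimage of a clopen set under the continuous map $\pp$, Proposition~\ref{maps}(2)), and these sets partition $\spp C$. To see each $\pp^{-1}(Y_\alpha)$ is connected, note that, $\pp$ being surjective, $\overline{\{\s(\q)\}}\cap\overline{\{\s(\q')\}}=\pp^{-1}(\V(\q)\cap\V(\q'))$ is nonempty whenever $\V(\q)\cap\V(\q')\neq\emptyset$; since $Y_\alpha$ is connected, its irreducible components can be enumerated $\V(\q_{i_1}),\dots,\V(\q_{i_k})$ so that each meets the union of the previous ones, and then an induction shows $\bigcup_{m\le l}\overline{\{\s(\q_{i_m})\}}$ is connected for each $l$, in particular $\pp^{-1}(Y_\alpha)$ is. A connected clopen subset is a connected component, so the $\pp^{-1}(Y_\alpha)$ are exactly the connected components of $\spp C$, and $\pp$, $\pp^{-1}$ give the asserted bijection, just as in (2). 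The crux of the whole argument is fact (i) — that a prime thick tensor ideal of $\dm(R)$ is detected on bounded complexes by its support; everything after it is a formal manipulation with $\pp$, $\s$ and the noetherianity of $\spec R$.
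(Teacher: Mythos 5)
Your proof is correct, and it takes a genuinely different route from the paper's. You pin down $\spp C$ explicitly: combining your fact (i), $\spp C=\pp^{-1}(\supp C)$ (which is precisely \cite[Proposition~2.9]{MT}, the citation you anticipated, and is where the hypothesis $C\in\db(R)$ enters), with your fact (ii), $\pp^{-1}(\V(\p))=\overline{\{\s(\p)\}}$, and with the noetherianity of $\spec R$, you obtain the finite decomposition $\spp C=\bigcup_{i=1}^r\overline{\{\s(\q_i)\}}$, where $\q_1,\dots,\q_r$ are the minimal primes of the closed set $\supp C$; from this both bijections drop out almost immediately, the irreducible components being the $\overline{\{\s(\q_i)\}}$ themselves and the connected components being obtained by grouping them. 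The paper never writes down this explicit description of $\spp C$. For part (2) it instead invokes Balmer's classification of irreducible closed subsets together with a separate bijection $\max\spp C\leftrightarrow\min\supp C$ (Lemma~\ref{minmax}); for part (1) it proves a bijection between \emph{all} clopen subsets of $\spp C$ and all clopen subsets of $\supp C$ (via Lemmas~\ref{5}, \ref{sppclopen} and \ref{3}), then observes that connected components are the minimal non-empty clopen subsets. Your argument is more computational and self-contained, giving a concrete picture of $\spp C$ as a finite union of point-closures; the paper's lemma-driven route produces slightly more machinery (notably the full clopen bijection and Lemma~\ref{sppclopen}) which it then reuses in the subsequent corollary on separable ring objects.
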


The proof of this theorem is divided into several lemmata.

Fix a bounded complex $C \in \db(R)$.
Then by \cite[Proposition 2.9]{MT}, for a thick tensor ideal $\X$, $C \in \X$ if and only if $\supp C \subseteq \supp \X$.
By combining this with Proposition \ref{maps}, $\P \in \spp C$ if and only if $\s\pp(\P) \in \spp C$ for a prime thick tensor ideal $\P$ of $\dm(R)$.

\begin{lem}
\begin{enumerate}[\rm(1)]
\item
$\pp(\spp C) = \supp C$.
\item
$\s(\supp C) \subseteq \spp C$.
\end{enumerate}
\end{lem}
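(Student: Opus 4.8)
The plan is to prove both inclusions separately, using the characterization established just before the lemma statement: for a prime thick tensor ideal $\P$ of $\dm(R)$, we have $\P \in \spp C$ if and only if $\s\pp(\P) \in \spp C$, together with the properties of the pair $(\pp, \s)$ collected in Proposition \ref{maps}.

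For part (1), the inclusion $\pp(\spp C) \subseteq \supp C$ should come from unwinding definitions: if $\P \in \spp C$, then $C \notin \P$, and since $\P$ is a prime thick tensor ideal containing $\s\pp(\P)$ by Proposition \ref{maps}(4), the failure of $C$ to lie in $\P$ forces $\supp C \not\subseteq \supp \P$; combining this with Proposition \ref{maps}(5), which identifies $\supp \P$ as the set of primes not contained in $\pp(\P)$, one extracts a prime $\p \in \supp C$ with $\p \subseteq \pp(\P)$, and then using that $\supp C$ is specialization closed (as $C$ is bounded, $\supp C = \bigcup_n \supp \H^n(C) = \V(I)$ for $I = \ann \H^*(C)$, a closed set) we get $\pp(\P) \in \supp C$. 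For the reverse inclusion $\supp C \subseteq \pp(\spp C)$: given $\p \in \supp C$, consider the prime $\s(\p)$. I would check that $C \notin \s(\p)$, i.e. $\s(\p) \in \spp C$ — this should follow because $\s(\p) = \supp^{-1}\{\q \mid \q \not\subseteq \p\}$, and $\p \in \supp C$ means $\supp C \not\subseteq \{\q \mid \q\not\subseteq\p\}$, so $C \notin \s(\p)$ by \cite[Proposition 2.9]{MT}. Then $\pp\s(\p) = \p$ by Proposition \ref{maps}(3) exhibits $\p$ in the image.

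For part (2), I want $\s(\supp C) \subseteq \spp C$, which is exactly the statement that for every $\p \in \supp C$ the prime $\s(\p)$ does not contain $C$; but this was already verified in the course of proving the reverse inclusion in part (1), so part (2) is essentially a restatement of that intermediate step and requires no new argument.

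The main obstacle I anticipate is the bookkeeping around the order-reversing nature of $\pp$ and $\s$ — in particular keeping straight that $\P \supseteq \s\pp(\P)$ (so membership in the ideal is the subtle direction) and correctly translating between "$C \in \P$" and "$\supp C \subseteq \supp \P$" via \cite[Proposition 2.9]{MT}, which is only available because $C$ is a \emph{bounded} complex. The use of boundedness of $C$ — both to invoke that proposition and to know $\supp C$ is Zariski closed (hence specialization closed) — is the one place where the hypothesis $C \in \db(R)$ is genuinely needed, and I would flag it explicitly.
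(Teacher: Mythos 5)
Your argument follows essentially the same route as the paper: translate $C \notin \P$ into $\supp C \not\subseteq \supp\P$ via the bounded-complex criterion from \cite[Proposition~2.9]{MT}, use the explicit description $\supp\P = \{\p \mid \p \not\subseteq \pp(\P)\}$ from Proposition~\ref{maps}(5), and exploit that $\supp C$ is Zariski closed (hence specialization closed). The paper packages this slightly differently, establishing $\pp^{-1}(\supp C) = \spp C$ by a chain of equivalences and then applying $\pp(-)$ using surjectivity, whereas you argue the two inclusions of part (1) directly; the second inclusion and part (2) are handled identically in both treatments.

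One small slip to correct: you write that ``$\P$ is a prime thick tensor ideal containing $\s\pp(\P)$ by Proposition~\ref{maps}(4),'' but that proposition asserts the opposite inclusion $\s\pp(\P) = \supp^{-1}\supp\P \supseteq \P$. As it happens, this claim plays no logical role in your chain --- the step $C\notin\P \Rightarrow \supp C \not\subseteq \supp\P$ is already a direct consequence of \cite[Proposition~2.9]{MT}, which is what you cite in your closing remarks --- so the argument survives, but the inclusion as stated is backwards and should be removed or reversed.
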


\begin{proof}
$(1)$
For a prime thick tensor ideal $\P$ in $\spp C$, we have the following equivalences:
\begin{align*}
\P \in \pp^{-1}(\supp C) &\Leftrightarrow \pp(\P) \in \supp C \\
&\Leftrightarrow \supp C \not\subseteq \supp \P = \{\p \in \spec R \mid \p \not\subseteq  \pp(\P)\} \\
&\Leftrightarrow C \not\in \P \\
&\Leftrightarrow \P \in \spp C.
\end{align*}
Here, the first and the last equivalences are clear.
Since $\{\p \in \spec R \mid \p \not\subseteq  \pp(\P)\}$ is the largest specialization closed subset of $\spec R$ not containing $\pp(\p)$, the second equivalence holds.
The third one follows from the above discussion.
As a result, $\supp C = \pp(\pp^{-1}(\supp C)) = \pp(\spp C)$ since $\pp$ is surjective.

$(2)$
For an element $\p \in \supp C$, $\supp C \not\subseteq \supp \s(\p)=\{\q \in \spec R \mid \q \not\subseteq \pp\s(\p)=\p \}$ shows $C \not\in \s(\p)$.
Thus, we obtain $\s(\p) \in \spp C $.
\end{proof}

From this lemma, the maps
$$
\pp: \spc \dm(R) \rightleftarrows \spec R :\s 
$$
restrict to maps
$$
\pp: \spp C \rightleftarrows \supp C :\s 
$$

\begin{lem}\label{minmax}
The above pair of maps induce a one-to-one correspondence
$$
\pp: \max \spp C \rightleftarrows \min \supp C :\s. 
$$
Here, $\max \spp C$ (resp. $\min \supp C$) is the set of maximal (resp. minimal) elements of $\spp C$ (resp. $\supp C$) with respect to the inclusion relation. 
\end{lem}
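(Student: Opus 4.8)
The plan is to deduce this from the previous lemma together with the fact that the restricted maps $\pp\colon\spp C\rightleftarrows\supp C\colon\s$ are order-reversing and satisfy $\pp\cdot\s=1$ and $\s\pp(\P)\supseteq\P$ (Proposition~\ref{maps}(1),(3),(4)). The key observation I would use repeatedly is the equivalence established just before the previous lemma: for a prime thick tensor ideal $\P$, one has $\P\in\spp C$ if and only if $\s\pp(\P)\in\spp C$, so that replacing $\P$ by $\s\pp(\P)$ stays inside $\spp C$ while (by (4)) only enlarging $\P$.

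First I would show $\pp$ sends $\max\spp C$ into $\min\supp C$. Take $\P\in\max\spp C$. By the boxed equivalence and maximality, $\P=\s\pp(\P)$. Now suppose $\q\in\supp C$ with $\q\subseteq\pp(\P)$; applying $\s$ and using that it is order-reversing gives $\s(\q)\supseteq\s\pp(\P)=\P$, and $\s(\q)\in\spp C$ by Lemma~(2) of the previous result, so maximality of $\P$ forces $\s(\q)=\P$, whence $\q=\pp\s(\q)=\pp(\P)$. Thus $\pp(\P)$ is minimal in $\supp C$. Conversely, I would show $\s$ sends $\min\supp C$ into $\max\spp C$: take $\p\in\min\supp C$, so $\s(\p)\in\spp C$; if $\Q\in\spp C$ with $\Q\supseteq\s(\p)$, then $\pp(\Q)\subseteq\pp\s(\p)=\p$ with $\pp(\Q)\in\supp C$ (previous lemma, part (1)), so minimality gives $\pp(\Q)=\p$, and then $\s(\p)=\s\pp(\Q)\supseteq\Q$, forcing $\Q=\s(\p)$.

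Finally I would check the two maps are mutually inverse on these subsets. The composite $\pp\circ\s$ is the identity everywhere by Proposition~\ref{maps}(3), so $\pp\s=1$ on $\min\supp C$. For the other composite, if $\P\in\max\spp C$ then $\s\pp(\P)=\P$ as noted above (the boxed equivalence plus maximality), so $\s\pp=1$ on $\max\spp C$. Since both maps are order-reversing bijections between these two posets, they are inverse to each other, completing the proof.

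The main obstacle is making sure the equality $\P=\s\pp(\P)$ for maximal $\P$ is correctly justified: it rests on combining the inclusion $\s\pp(\P)\supseteq\P$ from Proposition~\ref{maps}(4) with the fact that $\s\pp(\P)$ still lies in $\spp C$ (the boxed equivalence, valid precisely because $C$ is a \emph{bounded} complex, via \cite[Proposition 2.9]{MT}), so that maximality of $\P$ in $\spp C$ collapses the inclusion to an equality. Everything else is a formal consequence of the order-reversing adjunction-like relations recorded in Proposition~\ref{maps}.
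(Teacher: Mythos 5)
Your proof is correct and follows essentially the same route as the paper: both rely on $\pp\s=1$, the inclusion $\s\pp(\P)\supseteq\P$ from Proposition~\ref{maps}(4), the equivalence established just before the preceding lemma, and the preceding lemma itself to keep the maps inside $\spp C$ and $\supp C$. The only cosmetic difference is that the paper argues via injectivity of $\s$ and proves well-definedness plus surjectivity of $\s:\min\supp C\to\max\spp C$, whereas you symmetrically verify well-definedness of both restrictions and check the two composites are the identity; the underlying ideas are the same.
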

If we take $C= R$, this lemma recovers \cite[Theorem 4.12]{MT}.

\begin{proof}
Because $\s : \spec R \to \spc \dm(R)$ is injective, we have only to check that the map $\s: \min \supp C \to \max \spp C$ is well-defined and surjective.
Let $\p$ be a minimal element of $\supp C$.
We show that $\s(\p)$ is a maximal element of $\spp C$.
Take a prime thick tensor ideal $\P$ in $\spp C$ containing $\s(\p)$.
Then $\pp(\P) \subseteq \pp\s(\p) = \p$ by Proposition \ref{maps}.
Since both $\p$ and $\pp(\P)$ belong to $\supp C$, the minimality of $\p$ shows the equality $\p= \pp(\P)$ and hence we have
$$
\supp \P =\{\q \in \spec R \mid \q \not\in \pp(\P) = \p= \pp(\s(\p))\} = \supp \s(\p).
$$
This shows that $\P \subseteq \s(\p)$ and thus $\s(\p)$ is a maximal element in $\spp C$.
For this reason, the map $\s: \min \supp C \to \max \spp C$ is well-defined.

Next we check the surjectivity of the map $\s: \min \supp C \to \max \spp C$.
Let $\P$ be a maximal element of $\spp C$.
Since $\s\pp(\P)$ is also an element in $\spp C$, we get $\P = \s\pp(\P)$ from the maximality of $\P$.
Let $\p$ be an element of $\supp C$ with $\p \subseteq \pp(\P)$.
Then $\P =\s\pp(\P) \subseteq \s(\p)$. 
Since $\P$ is maximal in $\spp C$, one has $\P=\s(\p)$.
Henceforth, $\p = \pp\s(\p)= \pp(\P)$ and this shows that $\pp(\P)$ is a minimal element of $\supp C$.
As a result, $\s(\p)= \s \pp(\P) = \P$ shows that $\s: \min \supp C \to \max \spp C$ is surjective.
\end{proof}

The following result gives an easier way to check whether a given subspace is clopen.

\begin{lem}\label{5}
Let $X$ be either $\supp C$ or $\spp C$ and $W$ a subset of $X$.
If $W$ is both specialization closed and generalization closed, then $W$ is clopen. 
\end{lem}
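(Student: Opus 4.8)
The plan is to reduce the case $X=\spp C$ to the case $X=\supp C$ by transporting $W$ along the map $\pp$, and to settle the case $X=\supp C$ directly from noetherianity of $\spec R$. Observe first that a clopen subset is automatically specialization closed and generalization closed, so only the stated implication needs an argument.

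\emph{The case $X=\supp C$.} Since $C\in\db(R)$ is bounded, $\supp C=\bigcup_n\V(\ann_R\H^n(C))=\V(J)$ is a closed subspace of $\spec R$; in particular it is noetherian, so $\min\supp C=\{\p_1,\dots,\p_r\}$ is finite and $\supp C=\bigcup_{i=1}^r\V(\p_i)$. Let $W\subseteq\supp C$ be specialization closed and generalization closed. I would show $W=\bigcup\{\,\V(\p_i)\mid\p_i\in W\,\}$: the inclusion $\supseteq$ is specialization closedness, while for $\subseteq$ any $\q\in W$ contains some minimal prime $\p_i$, so $\p_i$ is a generalization of $\q$ in $\supp C$ and generalization closedness forces $\p_i\in W$, whence $\q\in\V(\p_i)$. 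Thus $W$ is a finite union of closed subsets of $\supp C$, hence closed; applying the same to the complement $\supp C\setminus W$ (again specialization and generalization closed) shows $W$ is open as well.

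\emph{The case $X=\spp C$.} Here $\spp C$ need not be noetherian, so I would use the continuous surjection $\pp\colon\spp C\to\supp C$ coming from the restricted maps. The crucial point is the identity $W=\pp^{-1}(\pp(W))$; the inclusion $\subseteq$ is trivial. First, for any $\Q\in W$ one has $\s\pp(\Q)\in\spp C$ (by the observation that $\P\in\spp C$ iff $\s\pp(\P)\in\spp C$) together with $\Q\subseteq\s\pp(\Q)$ from Proposition \ref{maps}(4), so generalization closedness of $W$ gives $\s\pp(\Q)\in W$. Now if $\P\in\spp C$ satisfies $\pp(\P)=\pp(\Q)$ for some $\Q\in W$, then $\s\pp(\P)=\s\pp(\Q)\in W$, and since $\P\subseteq\s\pp(\P)$, specialization closedness of $W$ yields $\P\in W$; this proves $\supseteq$. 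Next I would check, using that $\pp$ and $\s$ are order reversing and $\pp\s=1$, that $\pp(W)$ is specialization closed and generalization closed in $\supp C$: for instance, if $\pp(\P)\in\pp(W)$ with $\P\in W$ and $\pp(\P)\subseteq\q$ for $\q\in\supp C$, then $\s(\q)\subseteq\s\pp(\P)\in W$, so $\s(\q)\in W$ and $\q=\pp\s(\q)\in\pp(W)$; the generalization-closed verification is analogous. By the first case $\pp(W)$ is clopen in $\supp C$, so $W=\pp^{-1}(\pp(W))$ is clopen in $\spp C$ by continuity of $\pp$.

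The step I expect to be the main obstacle is establishing $W=\pp^{-1}(\pp(W))$ and that $\pp(W)$ inherits both closure properties: both rest on combining $\P\subseteq\s\pp(\P)$ with the fact that $W$ is stable under passing to generalizations and to specializations, and care is needed because the partial order on $\spc\dm(R)$ is opposite to the specialization order, so that for subsets of $\spc\dm(R)$ ``generalization closed'' means upward closed for $\subseteq$ and ``specialization closed'' means downward closed.
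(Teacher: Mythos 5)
Your proof is correct, but it takes a genuinely different route from the paper's. The paper argues directly inside $\spp C$ (and asserts the analogous argument for $\supp C$): it shows that $W$ equals the union of $\overline{\{\P\}}$ over the maximal elements $\P$ of $\spp C$ lying in $W$, using specialization closedness for one inclusion and (via $\P \subseteq \s\pp(\P) \subseteq \s(\p)$ for $\p$ minimal under $\pp(\P)$, together with generalization closedness) the other; since $\supp C$ is closed and hence has finitely many minimal primes, Lemma \ref{minmax} makes $\max\spp C$ finite, so $W$ is a finite union of closed sets and therefore closed, with openness by complementation. You instead settle the $\supp C$ case intrinsically with essentially the same finiteness-of-minimal-primes argument (reorganized around the $\V(\p_i)$), and then transport the $\spp C$ case across the continuous surjection $\pp\colon \spp C \to \supp C$. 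The key steps in your reduction — the identity $W=\pp^{-1}(\pp(W))$ and the stability of specialization/generalization closedness under $\pp$ — both hinge on $\P \subseteq \s\pp(\P)$ combined with the two closure properties of $W$, and in effect anticipate the paper's Lemma \ref{sppclopen}(2) and Lemma \ref{3}, but under the weaker hypothesis "specialization and generalization closed" rather than "clopen." Your version localizes the finiteness work on the Zariski side, which is arguably cleaner conceptually; the paper's version is a bit more self-contained in $\spp C$ and flows naturally from the $\min/\max$ bijection it has just established. Both proofs ultimately rest on the same ingredients (closedness of $\supp C$, the order-reversing adjoint pair $\pp,\s$, and $\P\subseteq\s\pp(\P)$).
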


\begin{proof}
We show this statement only for $X=\spp C$ because the similar argument works also for $X=\supp C$. 
By symmetry, we need to check that $W$ is closed. 

\begin{claim*}
$W= \bigcup_{\P \in \max \spp C} \overline{\{\P\}}$.
\end{claim*}

\begin{proof}[Proof of claim]
Since $W$ is specialization closed, $W \supseteq \bigcup_{\P \in \max \spp C} \overline{\{\P\}}$ holds.
Let $\P$ be an element of $W$.
Take a minimal element $\p$ in $\supp C$ contained in $\pp(\P)$.
We can take such a $\p$ since $\supp C$ is a closed subset of $\spec R$.
Then 
$$
\P \subseteq \s\pp(\P) \subseteq \s(\p).
$$
By Lemma \ref{minmax}, $\s(\p)$ is a maximal element of $\spp C$.
Moreover, $\s(\p)$ belongs to $W$ since $W$ is generalization closed and $\P \in W$.
These show that $\s(\p)$ is a maximal element of $\spp C$. 
Accordingly, we obtain $\P \in \overline{\{\s(\p)\}}$ with $\s(\p) \in \max \spp C$ and hence the converse inclusion holds true.
\renewcommand{\qedsymbol}{$\square$}
\end{proof}

Note that $\supp C$ is closed and thus contains only finitely many minimal elements.
By using the one-to-one correspondence in Lemma \ref{minmax}, $\max \spp C$ is also a finite set.
Consequently, $W$ is a finite union of closed subsets, and hence is closed.
\end{proof}

\begin{lem}\label{sppclopen}
Let $U$ be a clopen subset of $\spp C$. Then
\begin{enumerate}[\rm(1)]
\item $\p \in \pp(U)$ if and only if $\s(\p) \in U$, and
\item $\pp(U)$ is a clopen subset in $\supp C$. 
\end{enumerate}
\end{lem}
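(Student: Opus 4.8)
The plan is to establish part (1) directly from the formal properties of the pair $\pp,\s$, and then to obtain part (2) from the clopen criterion of Lemma~\ref{5}.

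For (1), the implication ``$\s(\p)\in U\Rightarrow\p\in\pp(U)$'' is immediate: since $\pp\s=1$ by Proposition~\ref{maps}(3) and $U\subseteq\spp C$, we get $\p=\pp\s(\p)\in\pp(U)$. Conversely, suppose $\p=\pp(\P)$ for some $\P\in U$. Recall that $\spp C$ is closed in $\spc\dm(R)$ (its complement $\u(C)$ is a basic open set), so since $U$ is open in $\spp C$ the complement $\spp C\setminus U$ is closed, hence specialization closed, in $\spp C$; thus $U$ is generalization closed in $\spp C$. Now Proposition~\ref{maps}(4) gives $\P\subseteq\s\pp(\P)=\s(\p)$, and $\s(\p)\in\spp C$ because $\p\in\pp(U)\subseteq\pp(\spp C)=\supp C$ (using the lemma just before Lemma~\ref{minmax} and part (2) of that lemma, $\s(\supp C)\subseteq\spp C$). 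Generalization closedness of $U$ in $\spp C$ then forces $\s(\p)\in U$, as desired.

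For (2), first note $\pp(U)\subseteq\pp(\spp C)=\supp C$. By Lemma~\ref{5} it suffices to show that $\pp(U)$ is both specialization closed and generalization closed in $\supp C$. For specialization closedness, take $\p\in\pp(U)$ and $\q\in\supp C$ with $\p\subseteq\q$; part (1) gives $\s(\p)\in U$, while $\s$ is order-reversing by Proposition~\ref{maps}(1), so $\s(\q)\subseteq\s(\p)$; since $U$ is closed, hence specialization closed, in $\spp C$ and $\s(\q)\in\spp C$, we obtain $\s(\q)\in U$, and part (1) applied to $\q$ yields $\q\in\pp(U)$. Generalization closedness of $\pp(U)$ in $\supp C$ is proved by the symmetric argument, this time using that $U$ is open, hence generalization closed, in $\spp C$. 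Lemma~\ref{5} now shows that $\pp(U)$ is clopen in $\supp C$.

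Apart from part (1), the argument is purely formal; the one point that needs care is the bookkeeping with subspace topologies --- namely that $\spp C$ and $\supp C$ are closed in their ambient spaces, so that specialization- and generalization-closedness inside them translate, via the characterizations recorded in Section~\ref{preliminaries}, into the expected conditions on the inclusion order --- together with keeping straight which of ``closed'' and ``open'' corresponds to ``specialization closed'' and ``generalization closed.''
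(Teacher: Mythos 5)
Your proof is correct and follows essentially the same route as the paper's: part (1) is obtained from $\pp\s=1$, Proposition \ref{maps}(4), and the fact that the open set $U$ is generalization closed in $\spp C$; part (2) is reduced via Lemma \ref{5} to checking specialization- and generalization-closedness of $\pp(U)$, which is done by transporting the corresponding properties of $U$ through $\pp$ and $\s$ using part (1) and order-reversal. The only differences from the paper are cosmetic: you spell out why $\s(\p)\in\spp C$ and why $U$ is generalization closed, and you phrase the second half of (2) as generalization-closedness of $\pp(U)$ rather than specialization-closedness of its complement, which are equivalent.
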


\begin{proof}
(1) `if' part is from Proposition \ref{maps}(2).
Let $\p$ be an element of $\pp(U) \subseteq \pp(\spp C) = \supp C$.
Then there is a prime thick tensor ideal $\P \in U$ such that $\pp(\P) = \p$.
Then $\s(\p)$ belongs to $U$ because $\P \subseteq \s \pp(\P)= \s(\p)$ and $U$ is generalization closed in $\supp C$.

(2) 
By Lemma \ref{5}, we have only to check that $\pp(U)$ and $\supp C \setminus \pp(U)$ are specialization closed in $\supp C$.

Take $\p \in \pp(U)$ and $\q \in \V(\p)$.  
Then $\s(\q) \subseteq \s(\p)$.
From $(1)$, one has $\s(\p) \in U$.
Since $U$ is specialization closed, we get $\s(\q) \in U$.
Thus, $\q = \pp\s(\q)$ belongs to $\pp(U)$. 
This shows that $\pp(U)$ is specialization closed in $\supp C$.

Take $\p \in \supp C \setminus \pp(U)$ and $\q \in \V(\p)$. 
Then $\s(\q) \subseteq \s(\p)$.
From $(1)$, one has $\s(\p) \not\in U$.
Assume that $\s(\q)$ belongs to $U$.
Since $U$ is generalization closed, $\s(\p)$ belongs to $U$, a contradiction.
Thus, $\s(\q) \not\in U$ and hence $\q \not\in \pp(U)$ by $(1)$.
This shows that $\supp C \setminus \pp(U)$ is specialization closed in $\supp C$.
\end{proof}

\begin{lem}\label{3}
Let $U$ be a clopen subset of $\spp C$.
Then $\pp^{-1}\pp(U)=U$.
\end{lem}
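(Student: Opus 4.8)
The plan is to establish the nontrivial inclusion $\pp^{-1}\pp(U)\subseteq U$; the reverse inclusion $U\subseteq\pp^{-1}\pp(U)$ holds for any subset. Throughout, $\pp$ is understood as the restricted map $\pp\colon\spp C\rightleftarrows\supp C$ introduced just before Lemma \ref{minmax}, so that $\pp^{-1}\pp(U)$ is computed inside $\spp C$.

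So I would begin with a prime thick tensor ideal $\P\in\spp C$ satisfying $\pp(\P)\in\pp(U)$, and set $\p:=\pp(\P)$. Applying Lemma \ref{sppclopen}(1) to the clopen set $U$, the membership $\p\in\pp(U)$ immediately yields $\s(\p)\in U$. On the other hand, Proposition \ref{maps}(4) provides the containment
$$
\P\ \subseteq\ \s\pp(\P)\ =\ \s(\p),
$$
so $\P$ lies below the element $\s(\p)$ of $U$.

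To conclude I would invoke that $U$, being clopen in $\spp C$, is in particular closed, hence specialization closed in $\spp C$. Since a subset of a Balmer spectrum is specialization closed exactly when it is closed under passing to smaller prime thick tensor ideals (Balmer \cite{B}; see Section 2), from $\s(\p)\in U$ and $\P\subseteq\s(\p)$ with $\P\in\spp C$ we obtain $\P\in U$. This establishes $\pp^{-1}\pp(U)\subseteq U$, as desired.

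The argument is short, and the only point worth isolating is that two complementary properties of the clopen set $U$ get used in tandem: generalization-closedness feeds Lemma \ref{sppclopen}(1) to produce the test point $\s(\p)\in U$, while specialization-closedness, combined with the containment $\P\subseteq\s\pp(\P)$ of Proposition \ref{maps}(4), pushes membership back down from $\s(\p)$ to $\P$. I do not foresee a genuine obstacle beyond keeping the inclusion directions between prime thick tensor ideals straight.
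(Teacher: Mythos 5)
Your proof is correct and takes essentially the same route as the paper: the nontrivial inclusion is handled by applying Lemma \ref{sppclopen}(1) to get $\s\pp(\P)\in U$, then using $\P\subseteq\s\pp(\P)$ together with specialization-closedness of the clopen set $U$ to conclude $\P\in U$. The only cosmetic difference is that you spell out the (true, standard) fact that closed implies specialization closed, which the paper leaves implicit.
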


\begin{proof}
The inclusion $U \subseteq \pp^{-1}\pp(U)$ is trivial.
For a prime thick tensor ideal $\P \in \pp^{-1}\pp(U)$, one has $\pp(\P) \in \pp(U)$.
By Lemma \ref{sppclopen}(1), we obtain $\s\pp(\P) \in U$.
Since $U$ is specialization closed in $\spp C$ and $\P \subseteq \s\pp(\P)$, we conclude that $\P$ belongs $U$.
\end{proof}



Now, we are ready to prove Theorem \ref{main}.

\begin{proof}[(Proof of Theorem \ref{main})]
$(1)$
By Lemma \ref{sppclopen}(2), we obtain a well-defined map
$$
\{\mbox{clopen subsets of } \spp C\} \rightarrow \{\mbox{clopen subsets of } \supp C\}, \,\, U \mapsto \pp(U).
$$
This map is injective by Lemma \ref{3} and surjective since $\pp: \spp C \to \supp C$ is continuous and surjecive.
Thus, this map is an order-preserving one-to-one correspondence.

By definition, connected components are nothing but minimal non-empty clopen subsets. 
Therefore, the statement $(1)$ follows from the above bijection.

$(2)$
By \cite[Proposition 2.9, Proposition 2.18]{B}, every irreducible closed subset of $\spp C$ is of the form 
$$
\overline{\{\P \}} =\{\Q \in \spc \dm(R) \mid \Q \subseteq \P \}
$$
for a unique prime thick tensor ideal $\P \in \spp C$.
Since an irreducible component is by definition a maximal irreducible closed subset, every irreducible component of $\spp C$ is of the form $\overline{\{\P \}}$ for a unique maximal element $\P$ of $\spp C$.
Thus, $\P = \s(\p)$ for some minimal element $\p$ of $\supp C$ by Lemma \ref{minmax}. 
Similarly, every irreducible component of $\supp C$ is of the form $\overline{\{\p\}}$ for a unique minimal element $\p$ of $\supp C$. 
Therefore, there is a maximal element $\P$ of $\spp C$ such that $\p=\pp(\P)$ by Lemma \ref{minmax}.
Altogether, the one-to-one correspondence of Lemma \ref{minmax} gives a one-to-one correspondence what we want.
\end{proof}

The following connectedness result is a direct consequence of Theorem \ref{main}.

\begin{cor}\label{conn}
For a bounded complex $C \in \db(R)$, $\spp C$ is connected (resp. irreducible) if and only if $\supp C$ is connected (resp. irreducible).
In particular, $\spc \dm(R)$ is connected (resp. irreducible) if and only if $\spec R$ is connected (resp. irreducible).
\end{cor}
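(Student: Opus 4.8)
The plan is to deduce the corollary directly from Theorem \ref{main}, using only the elementary fact that, with the conventions of this paper, a topological space $X$ is connected precisely when the set of its connected components has at most one element, and $X$ is irreducible precisely when the set of its irreducible components has at most one element (the empty space has no components and is both connected and irreducible under these conventions; a space that fails to be connected, resp. irreducible, splits off at least two components). Granting this, part $(1)$ of Theorem \ref{main} supplies a bijection between the connected components of $\spp C$ and those of $\supp C$, so these two sets have the same cardinality; in particular one of them has at most one element exactly when the other does, and therefore $\spp C$ is connected if and only if $\supp C$ is connected. Substituting part $(2)$ for part $(1)$ and "irreducible" for "connected" gives the irreducibility statement by the identical argument. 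Note that the degenerate case $C \cong 0$, i.e. $\supp C = \spp C = \emptyset$, needs no separate treatment: it is already subsumed by the bijection of Theorem \ref{main}, which is a bijection of the (possibly empty) sets of components.

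For the final assertion I would specialize to $C = R$, viewed as a complex concentrated in degree $0$, which lies in $\db(R)$. On the one hand $\supp R = \{\p \in \spec R \mid R_\p \not\cong 0 \text{ in } \dm(R_\p)\} = \spec R$, since every localization $R_\p$ is a nonzero ring. On the other hand, if a thick tensor ideal $\X$ of $\dm(R)$ contains the unit $R$, then $T \cong T \ltensor_R R \in \X$ for every $T \in \dm(R)$, so $\X = \dm(R)$ is not proper; hence no prime thick tensor ideal contains $R$, i.e. $\spp R = \spc \dm(R)$. Plugging these two identifications into the equivalence just established yields that $\spc \dm(R)$ is connected (resp. irreducible) if and only if $\spec R$ is.

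I do not expect a genuine obstacle here: the mathematical content is entirely packaged in Theorem \ref{main}, and what remains is the routine translation between "connected"/"irreducible" and the cardinality of the set of components, together with the two identifications $\supp R = \spec R$ and $\spp R = \spc \dm(R)$. If anything, the only point requiring a moment's care is making sure the empty/zero case is handled consistently with the paper's definitions, and as noted this is automatic from the bijection.
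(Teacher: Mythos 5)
Your proposal is correct and follows exactly the route the paper intends: the paper simply asserts the corollary as a ``direct consequence of Theorem \ref{main}'' without spelling out the details, and your filling-in (translating connectedness/irreducibility into having at most one component, applying the bijections of Theorem \ref{main}, and then specializing to $C=R$ with $\supp R=\spec R$ and $\spp R=\spc\dm(R)$) is precisely the standard argument being implicitly invoked.
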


\begin{rem}
A part of this corollary is shown in \cite[Corollary 4.13]{MT}.
\end{rem}




From now, let me mention two applications of Theorem \ref{main}.
We start with the following lemma.

\begin{lem}\label{supcon}
Let $C$ be a finitely generated $R$-module.
If $C$ is indecomposable, then $\supp C$ is connected.
\end{lem}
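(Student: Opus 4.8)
The plan is to reduce the statement to a fact about the ring $R/\ann_R C$ and to use the standard correspondence between disconnections of a prime spectrum and nontrivial idempotents.

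First I would recall that, since $C$ is a finitely generated $R$-module, one has $\supp C = \V(\ann_R C)$; regarding $C$ as an object of $\dm(R)$ concentrated in degree $0$, this is exactly the support of Example \ref{ex1}. In particular $\supp C$ is a closed subset of $\spec R$, canonically homeomorphic to $\spec(R/\ann_R C)$.

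Next, suppose for contradiction that $\supp C$ is disconnected, and set $A := R/\ann_R C$. Then $\spec A$ is disconnected, so by the standard bijection between the clopen subsets of $\spec A$ and the idempotents of $A$, there is an idempotent $e \in A$ with $e \neq 0$ and $e \neq 1$. Viewing $C$ as an $A$-module, multiplication by $e$ splits it as $C = eC \oplus (1-e)C$, a decomposition of $A$-modules and hence of $R$-modules. It remains to check that both summands are nonzero: since $\ann_R C$ is precisely the annihilator of $C$, the $A$-module $C$ is faithful, i.e.\ $\ann_A C = 0$; thus $eC = 0$ would force $e \in \ann_A C = 0$, a contradiction, and symmetrically $(1-e)C \neq 0$. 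Therefore $C \cong eC \oplus (1-e)C$ is a nontrivial decomposition, contradicting the indecomposability of $C$, and so $\supp C$ is connected.

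There is no serious obstacle here; the argument is entirely standard commutative algebra. The two points worth being careful about are that finite generation of $C$ is genuinely used — it is what guarantees $\supp C = \V(\ann_R C)$, in particular that $\supp C$ is closed — and that one must verify neither $eC$ nor $(1-e)C$ vanishes, which is exactly where faithfulness of $C$ over $R/\ann_R C$ enters.
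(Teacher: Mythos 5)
Your proof is correct and follows essentially the same route as the paper: reduce to $\spec(R/\ann C)$, lift a disconnection to a nontrivial product decomposition of $R/\ann C$, apply $- \otimes_R C$ to split $C$, and invoke indecomposability. The only cosmetic differences are that you invoke the clopen--idempotent bijection for $\spec(R/\ann C)$ directly (whereas the paper assembles the idempotent via the Chinese remainder theorem plus idempotent lifting modulo the nilradical), and you close with a faithfulness argument showing both summands are nonzero, whereas the paper instead shows directly that one of the pieces $F_i$ must be all of $\supp C$.
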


\begin{proof}
Set $I:= \ann C$. 
Consider a decomposition $\supp C = F_1 \sqcup F_2$ with $F_1, F_2$ closed.
Then there are radical ideals $I_1$ and $I_2$ such that $I_1 + I_2 = R$ and $I_1 \cap I_2 = \sqrt{I}$.
Using Chinese remainder theorem, we obtain a direct sum decomposition
$$
R/\sqrt{I} \cong R/I_1 \oplus R/I_2. 
$$
Moreover, from the idempotent lifting theorem (see \cite[Proposition 21.25]{L}), we obtain the following decomposition
$$
R/ I \cong R/J_1 \oplus R/J_2.
$$
Here, $J_1$ and $J_2$ are ideals with $\sqrt{J_i}=I_i$ for $i=1, 2$.
Tensoring with $C$, we obtain the following direct sum decomposition:
$$
C \cong C \otimes_R R/I \cong (C \otimes_R R/J_1) \oplus (C \otimes_R R/J_2).
$$
Since $C$ is indecomposable, $C \otimes_R R/J_1 \cong C$ or $C \otimes_R R/J_2 \cong C$.
If $C \otimes_R R/J_1 \cong C$, then $J_1 \subseteq \ann C=I$ and thus $I=J_1$.
In this case, $F_1=\supp C$. 
Similarly, if $C \otimes_R R/J_2 \cong C$, then one has $F_2= \supp C$.
Thus, we are done.
\end{proof}

The following corollary is a direct consequence of Theorem \ref{main} and Lemma \ref{supcon}.

\begin{cor}
Let $C$ be an indecomposable finitely generated $R$-module.
Then $\spp C$ is connected. 
\end{cor}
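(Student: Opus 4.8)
The plan is to deduce this immediately from Lemma~\ref{supcon} together with Theorem~\ref{main}(1) (equivalently, Corollary~\ref{conn}). First I would note that a finitely generated $R$-module $C$ may be regarded as a stalk complex concentrated in degree $0$, hence as an object of $\db(R)$; under this identification its support in the sense of Example~\ref{ex1} agrees with the usual module-theoretic support, since $\supp C = \bigcup_{n \in \mathbb{Z}} \supp \H^n(C) = \supp \H^0(C)$. Thus all results stated for bounded complexes apply to $C$.

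Next, since $C$ is indecomposable, Lemma~\ref{supcon} tells us that $\supp C$ is a connected subspace of $\spec R$. Now I would invoke Theorem~\ref{main}(1): the map $\pp$ induces a bijection between the set of connected components of $\spp C$ and the set of connected components of $\supp C$. As $\supp C$ has exactly one connected component, so does $\spp C$; that is, $\spp C$ is connected. (One could equally cite Corollary~\ref{conn} directly.)

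I do not anticipate a real obstacle here, as the statement is merely an assembly of results already established; the only point worth checking carefully is the harmless identification of the module with its stalk complex so that Lemma~\ref{supcon} and Theorem~\ref{main} genuinely apply to it. If one preferred a self-contained argument bypassing Theorem~\ref{main}, one could instead observe that $\pp \colon \spp C \to \supp C$ is continuous and surjective by the restricted maps constructed after Lemma~\ref{minmax}, so that a nontrivial clopen subset $U \subsetneq \spp C$ would, via Lemma~\ref{3} (which gives $\pp^{-1}\pp(U) = U$) together with Lemma~\ref{sppclopen}(2), yield a nontrivial clopen subset $\pp(U)$ of the connected space $\supp C$ — a contradiction.
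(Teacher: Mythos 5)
Your proposal is correct and matches the paper's own (one-line) proof exactly: the corollary is deduced by combining Lemma~\ref{supcon} with Theorem~\ref{main}(1) (equivalently Corollary~\ref{conn}), after identifying the module $C$ with its stalk complex in $\db(R)$. The alternative self-contained argument you sketch at the end is also sound, but it is not needed.
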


This corollary means the classifying support, in the sense of Balmer, of indecomposable $R$-module is connected.
Such a result has been shown by Carlson \cite{C} for the stable category of finite dimensional representations over a finite group, and more generally, by Balmer \cite{B2} for an idempotent complete strongly closed tensor triangulated category.

Next, we prove that every clopen subset of $\spc \dm(R)$ is homeomorphic to 
the Balmer spectrum of the Eilenberg-Moore category of some ring object.
Following \cite{B3, B4}, we recall the notion of a ring object and related concepts. 

Let $(\T, \otimes, \one)$ be a tensor triangulated category.
We say that an object $A \in \T$ is a {\it ring object} of $\T$ if there is a morphisms
\begin{align*}
&\mu: A \otimes A \to A, \\
&\eta: \one \to A \\
\end{align*}
satisfying the following commutative diagrams:
$$
\xymatrix{
A \otimes A \otimes A \ar[r]^{A \otimes \mu} \ar[d]_{\mu \otimes A} & A \otimes A \ar[d]^\mu\\
A \otimes A \ar[r]_\mu & A
}\,\,\,\,\,\,\,\,\
\xymatrix{
\one \otimes A \ar[r]^{\eta \otimes A} \ar[rd]_{\cong} & A \otimes A \ar[d]_\mu & A \otimes \one \ar[l]_{A \otimes \eta} \ar[ld]^{\cong}\\
& A &
}
$$
We say that a ring object $A$ of $\T$ is {\it commutative} if there is a morphism
$$
\tau: A \otimes A \to A \otimes A
$$
such that $\mu \tau = \mu$.
We say that a ring object $A$ of $\T$ is {\it separable} if there is a morphism
$$
\sigma: A \to A \otimes A
$$
such that $(A \otimes \mu)(\sigma \otimes A) = \sigma \mu = (\mu \otimes A)(A \otimes \sigma)$.

We say that an object $M \in \dm(R)$ is a (left) $A$-{\it module} if there is a morphism
$$
\lambda: A \otimes M \to M
$$
satisfying the following commutative diagrams:
$$
\xymatrix{
A \otimes A \otimes M \ar[r]^{A \otimes \lambda} \ar[d]_{\mu \otimes M} & A \otimes M \ar[d]^\lambda \\
A \otimes M \ar[r]_\lambda & M
}\,\,\,\,\,\,\,\,\
\xymatrix{
\one \otimes M \ar[r]^{\eta \otimes M} \ar[rd]_{\cong} & A \otimes M \ar[d]^\lambda \\
& M 
}
$$
Denote by $\Mod A$ the category of $A$-modules.
For an object $X$ of $\T$, $A \otimes X$ has an $A$-module structure via
$$
\mu \otimes X : A \otimes A \otimes X \to A \otimes X 
$$ 
Thus, we have a functor $F_A: \T \to \Mod A$.

Balmer \cite{B3} shows that if a ring object $A$ is separable, then the category $\Mod A$ admits a unique triangulated category structure  such that both $F_A: \T \to \Mod A$ and the forgetful functor $U_A: \Mod A \to \T$ are exact.
Moreover, if $A$ is commutative, then $\Mod A$ has a symmetric monoidal tensor product $\otimes_A$ and it makes $\Mod A$ the tensor triangulated category such that $F_A$ is a tensor triangulated functor.

Let me give the following easy observation.

\begin{lem}\label{ringobj}
If $R$ is decomposed into $R = A \times B$ as rings, then $A$ has a unique ring object structure by the natural multiplication $\mu: A \ltensor_R A \cong A \otimes_R A \cong A$ and the projection $\eta: R \to A$.
Moreover, the following holds true.
\begin{enumerate}[\rm(1)]
\item
$A$ is a commutative separable ring object in $\dm(R)$.
\item
For any complex $X \in \dm(R)$, it has an $A$-module structure if and only if $A \ltensor_R X \cong X$.
This is the case, its $A$-module structure is uniquely determined by underlying complex.
\item
$U_A$ preserves tensor products.
Namely, for $A$-modules $M, N$, one has $M \otimes_A N \cong M \ltensor_R N$ in $\dm(R)$.
\end{enumerate} 
\end{lem}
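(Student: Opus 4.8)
Write $R = A \times B$ as rings. Then $A$ is naturally an $R$-module (with $B$ acting as zero), and the ring structure on the object $A \in \dm(R)$ comes from the projection $\eta : R \to A$ and the isomorphism $\mu : A \ltensor_R A \xrightarrow{\cong} A$, which exists because $A$ is a projective, hence flat, $R$-module and $A \otimes_R A \cong A$ for an idempotently-split factor. I plan to verify each of the three numbered claims in turn, keeping in mind that the idempotent $e \in R$ with $A = Re$ is the real source of everything.

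\textbf{Proof of the preliminary assertion and (1).} First I would check that $(\mu, \eta)$ satisfies the unit and associativity diagrams; since everything happens in the full subcategory of complexes concentrated in degree $0$ and $A$ is flat, these diagrams reduce to the obvious identities in $\mod R$ for the ring-with-unit $A$, so this is immediate. Uniqueness of the ring structure with this $\mu$ and $\eta$ is vacuous once $\mu$ and $\eta$ are fixed. For commutativity, take $\tau : A \ltensor_R A \to A \ltensor_R A$ to be the symmetry isomorphism of the tensor product; then $\mu\tau = \mu$ because $\mu$ is (induced by) the commutative multiplication of the ring $A$. For separability, take $\sigma : A \to A \ltensor_R A$ to be the inverse of $\mu$ (an isomorphism!); then all three composites in the separability axiom collapse to identities after using that $\mu$ is a two-sided inverse of $\sigma$ and the associativity/unit constraints. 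So $A$ is a commutative separable ring object; none of this is hard.

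\textbf{Proof of (2).} If $X \in \dm(R)$ admits an $A$-module structure $\lambda : A \ltensor_R X \to X$, then the unit axiom forces $\lambda \circ (\eta \ltensor_R X)$ to be the canonical isomorphism $R \ltensor_R X \xrightarrow{\cong} X$; since $\eta \ltensor_R X$ is a split monomorphism (as $R \cong A \oplus B$ and $B \ltensor_R X$ is the complementary summand), $\lambda$ itself is a split epimorphism, and a count on homology — $\H^i(A \ltensor_R X) = \H^i(X) \otimes_R A$, and $\H^i(X)\otimes_R B$ is the kernel — shows $\lambda$ is an isomorphism, giving $A \ltensor_R X \cong X$. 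Conversely, if $A \ltensor_R X \cong X$ then $X$ is an $A$-module via $F_A$. For uniqueness of the module structure on a fixed underlying complex: two structures $\lambda, \lambda'$ both split the canonical map and both make the associativity square commute; I expect to argue that any $A$-module map $A \ltensor_R X \to X$ restricting to the identity on the $\eta$-summand is determined, because $A \ltensor_R X$ is generated over $A$ by that summand — concretely, $\lambda = \lambda'$ can be read off from $\lambda\circ(\mu\ltensor_R X) = \lambda\circ(A\ltensor_R\lambda)$ together with the normalization. This is the step I expect to be the main obstacle, since "underlying complex determines the module structure" requires a little care in the derived category (one cannot just chase elements); the clean way is probably to observe that $F_A$ restricted to the essential image of $U_A$ is an equivalence onto $\Mod A$ with inverse $U_A$, because $A\ltensor_R(-)$ is smashing/idempotent here ($A \ltensor_R A \cong A$), so $\Mod A$ is literally the subcategory of $X$ with $A\ltensor_R X \cong X$ and the structure map is canonical.

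\textbf{Proof of (3).} For $A$-modules $M, N$, the tensor product $M \otimes_A N$ is by construction a quotient (coequalizer) of $M \ltensor_R N$, but since $A\ltensor_R(-)$ is an idempotent localization and $M, N$ already lie in its image, $M \ltensor_R N$ also lies in the image (it equals $A \ltensor_R (M\ltensor_R N)$ up to the isomorphism $A\ltensor_R A\cong A$), and the coequalizer defining $\otimes_A$ collapses to an isomorphism $M \ltensor_R N \xrightarrow{\cong} M\otimes_A N$; equivalently $U_A$ is fully faithful and monoidal, so it preserves tensor products. I would phrase the whole of (3) as: $U_A$ identifies $\Mod A$ with the tensor-ideal (thick $\otimes$-ideal, in fact localizing-type) subcategory $\{X : A\ltensor_R X\cong X\}$ of $\dm(R)$, under which $\otimes_A$ corresponds to $\ltensor_R$. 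That observation also retroactively gives the cleanest proof of (2).
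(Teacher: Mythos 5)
Your proposal is essentially correct and lands on the same conclusions, but the paper's argument for (2) is both shorter and sharper, and you muddle the one small trick it uses. In the unit diagram, $\lambda\circ(\eta\ltensor_R X)$ is an isomorphism, which makes $\eta\ltensor_R X$ a split \emph{monomorphism} and $\lambda$ a split epimorphism; on the other hand, $\eta\colon R\to A$ is the projection onto a direct ring factor, so it is a split \emph{epimorphism} of $R$-modules, and hence so is $\eta\ltensor_R X$. A map that is simultaneously split mono and split epi is an isomorphism, so $\eta\ltensor_R X$ is invertible, and then $\lambda=(\text{canonical iso})\circ(\eta\ltensor_R X)^{-1}$ is forced, giving the isomorphism $A\ltensor_R X\cong X$ \emph{and} the uniqueness of $\lambda$ in a single stroke. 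You instead say $\eta\ltensor_R X$ is split mono ``as $R\cong A\oplus B$'' --- but that decomposition actually makes it split \emph{epi}, not mono; the mono part comes only from the unit axiom --- and then you reach for a homology count over $\H^i(X)\otimes_R A$ versus $\H^i(X)\otimes_R B$ to kill the complementary summand. That count does go through in $\dm(\mod R)$, but it is unnecessary, and your uniqueness argument (``generated over $A$ by the $\eta$-summand'') is correspondingly less crisp than simply reading $\lambda$ off as the inverse of $\eta\ltensor_R X$. For (1) and (3) you and the paper agree: (1) reduces to $A$ being a commutative separable projective $R$-algebra in the ordinary sense, with $\sigma=\mu^{-1}$; (3) the paper just cites the definition of $\otimes_A$ together with (2), which is what your coequalizer-collapse argument unwinds to. Your global framing of $\Mod A$ as the essential image of the idempotent $A\ltensor_R(-)$, with $U_A$ fully faithful and monoidal onto that tensor ideal, is a perfectly good conceptual packaging that the paper leaves implicit; it does not change the substance.
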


\begin{proof}
Since $A$ is a projective $R$-module, the statement $(1)$ means that $A$ is a commutative separable $R$-algebra in the usual sense and this is clear.
Uniqueness of this structure follows from $(2)$.

$(2)$ Let $M$ be an $A$-module. 
Consider the following commutative diagram:
$$
\xymatrix{
R \ltensor_R X \ar[r]^{\eta \ltensor_R X} \ar[rd]_{\cong} & A \ltensor_R X \ar[d]^\lambda \\
& M 
}
$$
Since $\eta \ltensor_R M$ is a split epimorphism, it must be an isomorphism and hence $\lambda$ is also an isomorphism.
The converse is trivial.
Moreover, the $A$-module structure $\lambda$ is uniquely determined as
$$
A \ltensor_R X \xrightarrow{(\eta \ltensor_R X)^{-1}} R \ltensor_R X \xrightarrow{\cong} X.
$$ 

The last statement $(3)$ directly follows from the definition of $\otimes_A$ and $(2)$, for details, see \cite{B4}.
\end{proof}

From $(2)$ in the above lemma, we can define a unique $A$-module structure for a complex $X \in \dm(R)$ with $A \ltensor_R X \cong X$.
For simplicity, we denote this $A$-module by $X_A$.
\begin{cor}
For any non-empty clopen subset $W$ of $\spc \dm(R)$, there is a commutative separable ring object $A$ of $\dm(R)$ such that 
$$
\varphi_A:={}^aF_A: \spc (\Mod A) \to \spc \dm(R),\,\, \P \to F_A^{-1}(\P)
$$
gives a homeomorphism onto $W$.
\end{cor}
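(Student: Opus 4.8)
The plan is to push the clopen set $W$ down to $\spec R$, realize it as $\spec A$ for a ring factor $A$ of $R$, identify $\Mod A$ with the corresponding factor of $\dm(R)$, and identify $\spc(\Mod A)$ with $W$.

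First, I would apply Lemma \ref{sppclopen} and Lemma \ref{3} with $C=R$, so that $\spp R=\spc\dm(R)$ and $\supp R=\spec R$; this produces a clopen subset $V:=\pp(W)$ of $\spec R$ with $W=\pp^{-1}(V)$. A clopen subset of $\spec R$ corresponds to an idempotent $e\in R$, hence to a ring decomposition $R=A\times B$ in which $\spec A$ is identified with $V\subseteq\spec R$ and $B\cong R/eR$ is the complementary factor. By Lemma \ref{ringobj}(1), $A$ is then a commutative separable ring object of $\dm(R)$, so $\varphi_A={}^aF_A$ is defined.

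Next, I would unwind $\Mod A$. From $R=A\times B$ one gets a tensor triangulated equivalence $\dm(\mod R)\simeq\dm(\mod A)\times\dm(\mod B)$ with derived tensor acting componentwise. By Lemma \ref{ringobj}(2) an object of $\dm(\mod R)$ underlies an $A$-module exactly when its $\dm(\mod B)$-component vanishes, the $A$-module structure is then unique, and every morphism of $\dm(\mod R)$ between two such objects is automatically $A$-linear; hence the forgetful functor $U_A$ is fully faithful with essential image the first factor $\dm(\mod A)$. By \cite{B3} this respects the triangulated structures, and by Lemma \ref{ringobj}(3) it is a tensor equivalence $\Mod A\simeq\dm(\mod A)$ carrying the unit $F_A(R)$ to the unit $A$ and carrying $F_A$ to the projection functor $q\colon\dm(\mod A)\times\dm(\mod B)\to\dm(\mod A)$. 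Thus $\varphi_A$ is identified with ${}^aq\colon\spc\dm(\mod A)\to\spc\dm(\mod R)$, and it remains to show this is a homeomorphism onto $W$. For any prime thick tensor ideal $\mathcal{R}$ of the product, the relation $(A,0)\otimes(0,B)=0\in\mathcal{R}$ forces $(A,0)\in\mathcal{R}$ or $(0,B)\in\mathcal{R}$, and not both (their direct sum is $R$); so $\spc\dm(\mod R)=\u((A,0))\sqcup\u((0,B))$ as a partition into clopen subsets. When $(0,B)\in\mathcal{R}$ one checks directly that $\mathcal{R}=q^{-1}(\P)$ for the prime $\P:=\{X\in\dm(\mod A)\mid(X,0)\in\mathcal{R}\}$; this exhibits ${}^aq$ as a bijection onto $\u((0,B))$, and the identity $\u((X,0))\cap\u((0,B))={}^aq(\u(X))$ for $X\in\dm(\mod A)$, together with continuity of ${}^aq$, upgrades it to a homeomorphism onto $\u((0,B))$. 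Finally, since $(0,B)\cong R/eR$ is a bounded complex, \cite[Proposition 2.9]{MT} gives $(0,B)\in\mathcal{R}\iff\supp(R/eR)\subseteq\supp\mathcal{R}$; here $\supp(R/eR)=\spec R\setminus V$, and by Proposition \ref{maps}(5), $\supp\mathcal{R}=\{\p\mid\p\not\subseteq\pp(\mathcal{R})\}$. Since $\spec R=V\sqcup(\spec R\setminus V)$ has no inclusions between primes of the two factors, this chases down to $(0,B)\in\mathcal{R}\iff\pp(\mathcal{R})\in V\iff\mathcal{R}\in\pp^{-1}(V)=W$, so $\u((0,B))=W$ and $\varphi_A$ is a homeomorphism onto $W$.

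The main obstacle is not a single hard step but the bookkeeping in the last paragraph: extracting the tensor triangulated equivalence $\Mod A\simeq\dm(\mod A)$ cleanly from Lemma \ref{ringobj}, and then matching the componentwise description of primes of the product with the Balmer-support/Zariski-support dictionary of Proposition \ref{maps} so that the image of ${}^aq$ comes out to be exactly $W$ and not merely some clopen subset. Everything else is formal.
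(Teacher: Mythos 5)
Your proof is correct, but it takes a genuinely different route from the paper's. Both proofs begin the same way: apply Lemma \ref{sppclopen} (and Lemma \ref{3}) with $C=R$ to produce a clopen $V=\pp(W)\subseteq\spec R$ with $W=\pp^{-1}(V)$, realize $V$ via an idempotent as $\supp A$ for a ring factor $R=A\times B$, and invoke Lemma \ref{ringobj} to make $A$ a commutative separable ring object. After that the arguments diverge. The paper stays inside Balmer's separability machinery: it cites \cite[Proposition 3.6]{B} to get a continuous injective map $\psi_A\colon\spp A\to\spc(\Mod A)$ induced by the forgetful functor $U_A$, observes that $F_AU_A\cong 1$ gives $\psi_A\varphi_A=1$, and then checks directly that the image of $\varphi_A$ is $W$ (using $A\ltensor_R B=0$ for one inclusion and $\varphi_A\psi_A(\P)=\P$ when $A\notin\P$ for the other). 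You instead make $\Mod A$ fully explicit: you use Lemma \ref{ringobj}(2)--(3) to identify $\Mod A$ with the factor $\dm(\mod A)$ of the product $\dm(\mod R)\simeq\dm(\mod A)\times\dm(\mod B)$, so that $\varphi_A$ becomes ${}^aq$ for the projection $q$, and then you compute $\spc$ of a product of tensor triangulated categories from scratch (the partition $\spc=\u((A,0))\sqcup\u((0,B))$, primes over the $A$-factor are exactly the $q^{-1}(\P)$, the basic-open identity gives openness) and identify $\u((0,B))$ with $W$ via \cite[Proposition 2.9]{MT} and Proposition \ref{maps}(5). Your version is longer but more self-contained: it avoids relying on the abstract map $\psi_A$ of \cite[Proposition 3.6]{B} and makes the category $\Mod A$ and the map $\varphi_A$ completely concrete, at the cost of having to verify the product decomposition of the Balmer spectrum by hand. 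The paper's version is shorter because Balmer's general framework already supplies the inverse map and its continuity. Both are valid; no gaps in yours.
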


\begin{proof}
By Lemma \ref{sppclopen}, $\pp(W)$ is a clopen subset of $\spec R$.
Therefore, by using the argument in the proof of Lemma \ref{supcon}, there is a direct sum decomposition $R = A \times B$ of rings with $\pp(W) = \supp A$. 
Then Lemma \ref{ringobj} shows that $A$ has a commutative separable ring object structure.
Since $U_A$ preserving tensor products, one can easily check that the forgetful functor $U_A: \Mod A \to \dm(R)$ induces a continuous injective map
$$
\psi_A:\spp A \to \spc(\Mod A), \P \to U_A^{-1}(\P),
$$
see \cite[Proposition 3.6]{B}.
Furthermore, the image of $\varphi_A$ is contained in $\spp A$ and $\psi_A\varphi_A=1$ because $F_A U_A \cong 1$. 
For this reason, we have only to check that the image of $\varphi_A$ is $W$.

Let $\P$ be a prime thick tensor ideal of $\Mod A$.
By definition, 
$$
\varphi_A(\P)=\{X \in \dm(R) \mid F_A(X)=(A \ltensor_R X)_A \in \P\}
$$
and it contains $B$ because $A \ltensor_R B =0$.
In particular, 
$$
\supp B \subseteq \supp \varphi_A(\P) =\{\p \in \spec R \mid \p \not\subseteq \pp(\varphi_A(\P))\}
$$ 
and thus $\pp(\varphi_A(\P)) \in \spec R \setminus \supp B = \supp A$.
Therefore, $\varphi_A(\P) \in W$ by Lemma \ref{3}.
Conversely, take a prime thick tensor ideal $\P$ from $W$.
Then $\pp(\P) \in \pp(W) = \supp A$ implies that $A \not\in \P$.
Therefore, 
$$
\varphi_A(\psi_A(\P))=\{X \in \dm(R) \mid A \ltensor_R X \in \P\}=\P
$$
since $A \notin \P$.
Thus, we conclude that $\varphi_A(\spc(\Mod A))=W$. 
\end{proof}



\begin{ac} 
The author is grateful to his supervisor Ryo Takahashi for his many grateful comments.
\end{ac}

\end{document}